\newcommand{\Z}{\mathbb{Z}}
\newcommand{\F}{\mathbb{F}}
\newcommand{\supp}{{\rm supp}}
\newcommand{\Hom}{{\rm Hom}}
\newcommand{\dd}{{\bf d}}
\newcommand{\xy}{\gamma}
\newcommand{\f}{{\cal F}}
\newcommand{\cc}{{\cal C}}
\newcommand{\A}{{\cal A}}
\newtheorem{theorem}{Theorem}
\newtheorem{remark}{Remark}
\newtheorem{lemma}[theorem]{Lemma}
\newtheorem{corollary}[theorem]{Corollary}
\newdefinition{definition}{Definition}
\newdefinition{example}{Example}
\newproof{proof}{Proof}
\begin{document}
\begin{frontmatter}
\title{Induced Weights on Quotient Modules and an Application to Error Correction in Coherent Networks\thanks{}
}
%\subtitle{Do you have a subtitle?\\ If so, write it here}

%\titlerunning{Short form of title}        % if too long for running head

\author{Eimear Byrne}
\ead{ebyrne@ucd.ie} 
%\authorrunning{Short form of author list} % if too long for running head

\address{School of Mathematics and Statistics,\\
           University College Dublin, Ireland}

%\date{Received: date / Accepted: date}
% The correct dates will be entered by the editor

%\maketitle

\begin{abstract}
We consider distance functions on a quotient module $M/K$ induced by distance functions on a module $M$.
We define error-correction for codes in $M/K$ with respect to induced distance functions.
For the case that the metric is induced by a homogeneous weight, we derive analogues of the Plotkin and Elias-Bassalygo bounds and give their asymptotic versions. These results have applications to coherent network error-correction in the presence of adversarial errors.
We outline this connection, extending the linear network coding scheme introduced by Yang {\em et al}.

\end{abstract}

\begin{keyword}network code \sep network error-correction \sep coherent networks \sep Plotkin bound \sep Elias bound \sep finite Frobenius ring \sep homogeneous weight
\end{keyword}
% \PACS{PACS code1 \and PACS code2 \and more}
% \subclass{MSC code1 \and MSC code2 \and more}
\end{frontmatter}

\section{Introduction}

Coding in data communication networks has been shown to offer many advantages in terms of data rate, error correction and security. 
Many coding models have been considered for a variety of networks. 
Error-correction in {\em coherent} network coding has been considered in \cite{KM03,RK17,YNY07,YY07,YYN11,Z08}. For non-coherent networks, where the network topology is unknown, subspace codes have been shown to offer good solutions for error correction and have been widely studied. This is also often referred to as {\em random} network coding, albeit a different notion of random network coding as introduced by Ho {\em et al} in \cite{ho}.  
  
We consider the set-up for coherent network coding described in \cite{YNY07,YY07,YYN11}. In coherent network coding the network topology is known and the {\em encoding vectors} are often chosen deterministically, according to connectivity of the network. In \cite{jaggi}, the authors describe a polynomial-time deterministic algorithm to generate a linear code for error-free networks, which is extended in \cite{YYN11} for networks with errors. 

The network is described as a directed acyclic graph with a single source node and several sink nodes, or receivers. The source transmits some $m$ data packets, one for each edge with which it is incident, all $m$ of which are to be delivered to each sink. This is referred to as {\em multicast}. Successful delivery of all data packets to a sink requires at least $m$ edge disjoint paths from the source to the sink.
In a linear network coding scheme, at each node in the network linear combinations of packets on its incoming edges are transmitted along its outgoing edges. Transfer of data from source to sink nodes may be described by a {\em transfer matrix}, which can be assumed to be invertible if its coefficients are chosen from a large enough ring or field. Full details of this approach for a finite field alphabet may be read in \cite{KM03}. A transfer matrix $F$ is constructed as $F=(I-K)^{-1}$, where an entry of $K$ is non-zero only if the corresponding entry of the adjacency matrix of the line graph of the network is non-zero, that is, if $K$ `fits' this adjacency matrix. Each column of $F$ corresponds to an edge in the network; a particular sink only `sees' the columns of $F$ corresponding to the edges incident with it. The projection of $F$ onto these columns yields a matrix whose row space is a linear code and the sink can retrieve all $m$ packets if the particular $m$-subset of its rows (corresponding to the source packets) are linearly independent.

Error correction in the coherent network coding model is an important and challenging problem. A single (Hamming) error introduced in a link can propagate through the network infecting many other links. However, inherent redundancy in the network means that some error patterns are invisible to a given sink node, specifically, those error patterns corresponding to elements orthogonal to its code. Aside from these irrelevant errors, the code of a sink node may have some error correction capability.

In an effort to quantify these properties of a linear network code, in \cite{YYN11} the authors present refinements of the sphere-packing, Singleton and Gilbert-Varshamov bounds for an arbitrary linear error-correcting network code over a finite field.
The error-model considered is for {\em adversarial errors}, that is, where it is assumed that an adversary has access to some number of edges in the network. 
The bounds in \cite{YYN11} and the references therein are implicitly bounds on the size of codes in a quotient vector space,
with respect to a distance function induced by the Hamming distance on the original vector space. 

In this work we continue this line of research, establishing refinements of the Plotkin and Elias-Bassalygo bounds for codes in quotient modules. Our results are of independent interest and can be expressed without direct reference to a network. However, in the context of network coding, we extend the framework of \cite{YYN11} to present a model that holds not only for finite field alphabets and Hamming errors, but also for their natural coding theoretic generalizations. When reduced to the finite field case, the bounds given here outperform the sphere-packing and Singleton bounds for some parameters, similar to the comparison of the Plotkin and Elias-Bassalygo bounds for classical Hamming codes.

In our setting, the underlying alphabet is a finite bimodule and the codes are equipped with a distance function induced by the {\em homogeneous weight}.
This very general model includes the case where the alphabet is a finite field and the induced metric comes from the Hamming weight and so is a strict extension of the coding scheme described in \cite{YYN11}. More importantly, we develop upper bounds on codes associated with a fixed network based on the Plotkin and Elias bounds. These results are new both for the classical case of a finite field and in the more general case of a finite Frobenius bimodule\footnote{The results given here were presented at the International Workshop in Coding and Cryptography, Bergen in 2013 \cite{B13}. The finite field case has been considered independently in \cite{WJ15}.}.

We will close this section with a motivating example from network coding, to show how codes in quotient spaces may arise in applications. In Section 2
we give preliminary facts on rings and modules relevant to the paper, and introduce weight functions on quotient modules induced from other weight functions. In Section 3 we discuss the homogeneous weight and its behaviour on Frobenius bimodules. In particular we note the character theoretic description of such weights in this case, which we require for the results given in Section 4. 
The main results of the paper are in Section 4, where we give analogues of the Plotkin and Elias-Bassalygo bounds for codes in quotient modules with respect to the induced homogeneous weight. We furthermore give asymptotic versions of these bounds. In Section 5 we describe how codes in quotient modules fit into a model of coherent network error-correction.

\subsection{A Motivating Example from Network Coding}

We give a simple example, in advance of giving a formal description of a network code.    

\begin{example}\label{ex:toy}
	Consider the following network, whose source $s$ emits $2$ packets $x_1$ and $x_2$ along the edges $a_1$ and $a_2$, and which has two sink nodes $t_1$ and $t_2$ as shown. The aim is that each sink should be able to retrieve both transmitted packets from the data it collects from its incident edges.	
	
	\vspace{3mm}
	%\begin{figure}[]
	\includegraphics[scale=0.75]{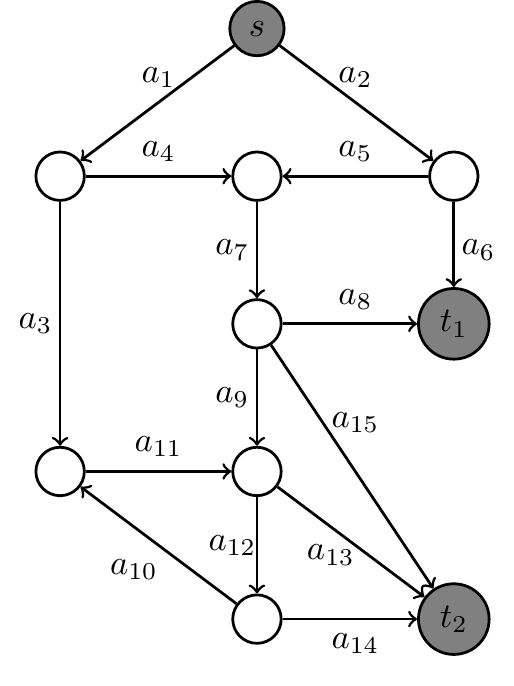}
	%	\end{figure}
	%\begin{center}

	%\end{center}
	
	The line-graph of the network has adjacency matrix as shown below on the left. If we treat this as a matrix $K$ over $\F_2$ then one possible invertible transfer matrix for the network is given by $F=(I-K)^{-1}$, as shown below on the right.
	
	$$K=\left(\begin{array}{c}
	001100000000000\\
	000011000000000\\
	000000000010000\\
	000000100000000\\
	000000100000000\\
	000000000000000\\
	000000011000001\\
	000000000000000\\
	000000000001100\\
	000000000010000\\
	000000000001100\\
	000000000000010\\
	000000000000000\\
	000000000000000\\
	000000000000000
	\end{array}
	\right),\:\: F=(I-K)^{-1}
	=\left(\begin{array}{c}
	1     0     1     1     0     0     1     1     1     0     1     0     0     0     1\\
	0     1     0     0     1     1     1     1     1     0     0     1     1     1     1\\
	0     0     1     0     0     0     0     0     0     0     1     1     1     1     0\\
	0     0     0     1     0     0     1     1     1     0     0     1     1     1     1\\
	0     0     0     0     1     0     1     1     1     0     0     1     1     1     1\\
	0     0     0     0     0     1     0     0     0     0     0     0     0     0     0\\
	0     0     0     0     0     0     1     1     1     0     0     1     1     1     1\\
	0     0     0     0     0     0     0     1     0     0     0     0     0     0     0\\
	0     0     0     0     0     0     0     0     1     0     0     1     1     1     0\\
	0     0     0     0     0     0     0     0     0     1     1     1     1     1     0\\
	0     0     0     0     0     0     0     0     0     0     1     1     1     1     0\\
	0     0     0     0     0     0     0     0     0     0     0     1     0     1     0\\
	0     0     0     0     0     0     0     0     0     0     0     0     1     0     0\\
	0     0     0     0     0     0     0     0     0     0     0     0     0     1     0\\
	0     0     0     0     0     0     0     0     0     0     0     0     0     0     1
	\end{array}
	\right)$$
	
	The word $x=(x_1,x_2,0,0,0,0,0,0,0,0,0,0,0,0,0) \in \F_2^{15}$ corresponds to the source transmitting packets $x_i$ along its incident edges $a_i$, $i=1,2$. This word traverses the network as $$c=xF= (x_1,x_2,x_1,x_1,x_2,x_2,x_1+x_2,x_1+x_2,x_1+x_2,0,x_1,x_2,x_2,x_2,x_1+x_2).$$
	The sink $t_1$ received the projection onto the 6th and 8th columns, which is the subvector $(x_2,x_1+x_2)$, while the receiver at $t_2$ gets the projection onto coordinates $13,14,15$ and so receives $(x_2,x_2,x_1+x_2)$. Both receivers can recover the source packets $x_1$ and $x_2$. 
	We may associate with each sink $t_i$ a code $C_i$. If we let $\F_2^2$ be the message space for each sink then $C_1= \F_2^2$ is an $\F_2$-$[2,2]$ code with generator matrix 
	$G_1 =\left[\begin{array}{c} 01\\ 11  \end{array}\right] $, found by projecting onto the 1st 2 rows and columns 6 and 8 of $F$. $C_2$ is an $\F_2$-$[3,2]$ code with generator matrix 
	$G_2 =\left[\begin{array}{c} 001\\ 111  \end{array}\right]$  found by projecting onto the 1st 2 rows and last 3 columns of $F$. In fact, we embed $\F_2^2$ into $\F_2^{15}$
	to get a message space ${\cal M} = \{(x_1,x_2,0,0,...,0): x_i \in \F_2\}$. We call $F_1$ the $15 \times 2$ matrix consisting of the 6th and 8th columns of $F$ and we call $F_2$ the $15 \times 3$ matrix consisting of the last 3 columns of $F$. 
	Then $C_i = \{xF_i: x \in {\cal M} \}$.
	
	Now suppose that an adversary can corrupt any links in the network by injecting errors. This may be represented by an error vector $e \in \F_2^{15}$, where $e_i$ is non-zero if an error occurs on the arc $a_i$ of the network. The error $e$ propagates through the network as $eF$ and the codeword $c=xF$ is corrupted, resulting in the network word 
	$(x+e)F$. The sink at $t_1$ gets the received word
	$ (x+e)F_1
	=(x_2+e_2+e_6,x_1+x_2+e_1+e_2+e_4+e_5+e_7+e_8)$. If the error vector $e$ has the form
	$e = (e_1,e_2,e_3,e_4,e_5,e_2,e_7,e_1+e_2+e_4+e_5+e_7,e_9,e_{10},...,e_{15}) $,
	that is, if $e$ is in the left nullspace $K_1$ of $F_1$, then the received word is $(x_2,x_1+x_2)$, which can be decoded. Otherwise, sink $t_1$ will incorrectly decode its received packets as $x_1+e_1+e_4+e_5+e_6+e_7+e_8$ and $x_1+x_2+e_1+e_2+e_4+e_5+e_7+e_8$.    
	Errors in the kernel $K_1$ are `invisible' to $t_1$. Errors that lie outside $K_1$ cannot be corrected by the receiver. Therefore, it makes sense to identify the code $C_1$ as a subspace of the quotient space $\F_2^{15} / K_1$ and to measure the distance of an error to a codeword in this quotient space. For a distance function on $\F_2^{15}$, the induced weight of a coset $w+K_1$ may be defined to be the minimum weight over all elements of $w+K_1$ with respect to this original distance function (the weight of a coset leader).
	With respect to the Hamming metric, every non-zero word of $C_1$ has induced weight $1$, since every coset of $K_1$ contains a word of Hamming weight 1 and hence $C_1$ corrects no errors.
	The code $C_2$ at $t_2$ can tolerate erasures in the first and second coordinates but not in the third.
	Let $K_2=\{ e \in \F_2^{15}: eF_2=0 \}$, which has dimension 13 in $\F_2^{15}$.
	The 4 codewords of $C_2$ correspond to the following cosets of $K_2$ in $\F_2^{15}$:
	
	\begin{center}
	\begin{tabular}{r|r|c}
		\hline
		$C_2$ &  $\F_2^{15} / K_2$ & Weight \\
		\hline
		000 & $K_2$ & 0\\
		001 & 10 00000 00000 000 + $K_2$& 1\\
		110 & 00 10000 00000 000 + $K_2$& 1\\
		111 & 01 00000 00000 000 + $K_2$& 1 \\
		\hline
	\end{tabular}
    \end{center}
	
	$C_2$ is itself a subspace of $\F_2^{3}$ and again all its elements have weight 1 with respect to the weight induced by the Hamming metric, so $C_2$ is 0-error-correcting. However, the receiver at $t_2$ can detect certain error patterns, namely those $v$ for which $vF_2$ has different coordinates in the first two positions.   
\end{example}

\section{Modules, Quotients and Induced Metrics}

In terms of the network code, which we define formally in Section \ref{sec:code}, each constituent code will have the bimodule $\cal A$ as its alphabet and the intermediate node operations will be $R$-linear. For the weight function we consider in later sections, we will assume further properties of ${\cal A}$, namely that it be a {\em Frobenius} bimodule.

\subsection{Bimodules over a Finite Ring}

Let $R$ be a finite ring with unity and let ${\cal A}$ be a finite $R$-$R$ bimodule. 
This means that ${\cal A}$ is both a left and right $R$-module satisfying $(ra)r=r(ar)$
for all $r \in R$ and $a \in {\cal A}$. We write $_R{\cal A}_R$ to indicate that ${\cal A}$ is an $R$-$R$ bimodule. The reader is referred to \cite{AW92,kasch} for further background material on rings and modules.

\begin{example}
	
	We give some examples of $R$-$R$ bimodules ${\cal A}$. There are many.
	\begin{enumerate}
		\item $R = {\cal A}=\F_q$
		
		\item $R=\F_q$, ${\cal A} = \F_q^{k \times \ell}$
		
		\item $R=\F_q$, ${\cal A} = \F_{q^\ell}$
		
		\item $R={\cal A}$ (Any ring $R$ is a bimodule over itself.)
		
		\item If $R$ is a subring of a finite ring $S$ then $S={\cal A}$ is an $R$-$R$ bimodule. 
		
		\item $R={\cal A} = \F_q$, $\Z_4$, $\Z_k$, $\F_q^{k \times \ell}$, $GR(p^\ell,r)$.
		
		\item If $R$ is commutative then any left $R$-module is an $R$-$R$ bimodule.
		
		\item The tensor product $U \otimes V$ over $\Z$ is an $R$-$R$ bimodule for a left $R$-module $U$ and right $R$-module $V$.
		
		\item For any finite ring $R$, its character module $\hat{R}:={\rm Hom}(R,{\mathbb C}^\times)$ (the homomorphisms from $(R,+)$ to the multiplicative group of ${\mathbb C}$) is an $R$-$R$ bimodule.
	\end{enumerate}

\end{example}

\subsection{Submodules, Homomorphisms and Induced Weights}

\begin{definition}
	Let $R$ be a finite ring and let $M$ and $N$ be left $R$-modules. 
	An $R$-homomorphism $f$ from $_RM$ to $_RN$ (i.e. as left $R$-modules) is a map $f:M\longrightarrow N$ such that 
	$$f(r_1a+r_2b)=r_1f(a)+r_2f(b)$$ for any $r_1,r_2 \in R, a \in M$.
	An $R$-homomorphism is defined similarly for right $R$-modules $M$ and $N$. 
	If $M$ and $N$ are both $R$-$R$ bimodules then
	an $R$-homomorphism $f$ from $_RM_R$ to $_RN_R$ (i.e. as bimodules) is a map $f:M\longrightarrow N$ such that 
	$$f(r_1as_1+r_2bs_2)=r_1f(a)s_1+r_2f(b)s_2$$ for any $r_1,r_2,s_1,s_2 \in R, a \in M$. 
	In other words, $f$ is an $R$-homomorphism from $M$ to $N$ as both left and right $R$-modules.
\end{definition}

We'll write $\Hom(_RM,_RN)$ and $\Hom(M,N)$ to denote the $R$-linear left and bimodule homomorphisms, respectively from $M$ to $N$. 
A submodule $K$ of a bimodule $M$ is both a left and right $R$-submodule of $M$.

We define a weight function, or weight on an $R$-module $M$ to be a map ${\bf w} : M \longrightarrow {\mathbb R}$ such that ${\bf w}(0)=0$.

\begin{definition}
	Let $M$
	be a left (resp. right, resp. bi-) modules over a finite ring $R$. 
	Let $K$ be a left (resp. right, resp. bi-) submodule of $M$. 
	Let ${\bf w}$ be a weight function  on $M$. The weight function on the quotient module $M/K$ induced by ${\bf w}$ 
	is defined to be
	\begin{eqnarray*}
		\hat{\bf w}(x)&: =& \min \{{\bf w}(z) : z \in x + K \}
	\end{eqnarray*}    
\end{definition}

A weight function ${\bf w}$ on an $R$-module $M$ determines a map $\dd$ on $M \times M$ via $\dd(x,y):={\bf w}(x-y)$. 
It can be checked that if $\dd$ is a distance function on $M$ then the corresponding induced function $\hat{\dd}$ on Im$f$ defined by 
$\hat{\dd}(u,v):=\hat{\bf w}(u-v)$ is also a distance function.

\begin{definition}
	Let $\delta >0$. Let $M$ be a left (resp. right, resp. bi-) module over a finite ring $R$ and let $K$ be an $R$-submodule of $M$. 
	%Let $f$ be an $R$-module epimorphism from $M$ to $N$ as left (resp. right, resp. bi-) modules. 
	Let $\dd$ be a distance function on $M$ and let $\hat{\dd}$ be the distance function on $M/K$ induced by $\dd$.
	We say that a non-empty subset $C$ of $M/K$ is a $\delta$-error correcting code with respect to $\hat{\dd}$ 
	if for any $z +K\in M/K$ and $c+K \in C$ satisfying $\hat{\dd}(z+K,c+K)<\delta$, it holds that
	$$\hat{\dd}(z+K,c+K)< \hat{\dd}(z+K,c'+K)$$ for all $c'+K \in C$.
\end{definition}

  Now suppose that we have $R$-modules $K,M$ with $K$ a left (resp. right, resp. bi-) submodule of $M$.
  Let $N$ be a left (resp. right, resp. bi-) $R$-module and let $f$ be an $R$-module epimorphism 
  from $M$ to $N$ as left (resp. right, resp. bi-) modules, with kernel $K$. 
  Then a distance function $\dd$ on $M$ gives rise to an induced distance function $\bar{\dd}$ on $N$ defined by
  $\bar{\dd}(f(x),f(y)) = \hat{\dd}(x+K,y+K)$. In particular, we have an isometry between 
  $(M/K,\hat{\dd})$ and $(N,\bar{\dd})$.
  
   Let $n$ be a positive integer. We extend a weight function ${\bf w}$ of an $R$-module ${\cal A}$ to a weight function on ${\cal A}^n$ in the obvious way:
  $${\bf w}: {\cal A}^n \longrightarrow {\mathbb R}:{\bf w}(c) \mapsto \sum_{i=1}^n {\bf w}(c_i),$$
  for any $c=(c_1,\ldots,c_n) \in {\cal A}^n$.
  Obviously, not all weight functions on ${\cal A}^n$ can be derived in this way, but for the bounds we derive later we restrict to such weight functions. 
  With respect to the application to coherent network coding we describe later,
  we will take $M ={\cal A}^n$, $N={\cal A}^N$ for a bimodule ${\cal A}$.                   	

\section{Modules and Homogeneous Weights}

The homogeneous weight was first introduced on the ring ${\mathbb Z}_m$ in \cite{CH97}. It is a very a natural generalization of the Hamming weight. On $\Z_4$ it is given by the Lee weight which yields an isometry to $\F_2^2$ under the Hamming weight. Generalizations of this weight function have appeared in \cite{GNW04,GS00,HN99}. In coding theory, codes whose alphabet is a finite Frobenius ring, or more generally is a finite Frobenius bimodule, play an important role. This is because these are the largest class of rings or modules such that fundamental theorems such as those extending the MacWilliams duality theorem and the MacWilliams extension theorem hold for ring-linear codes (see \cite{GHMcFWZ,GNW04,W99,W09} and associated references for further details).

%With regards to algebraic aspects of coding theory, the homogeneous weight behaves particularly well  when defined on a Frobenius ring, since such a ring is isomorphic as an $R$ module to its character module.

\subsection{Homogeneous Weight Functions}
Recall that a weight on an $R$-module $M$ is a map ${\bf w} : M \longrightarrow {\mathbb R}$ such that ${\bf w}(0)=0$. 
The homogeneity conditions in \cite{GNW04} are given by the following.

\begin{definition}
    A weight function ${\bf w}$ on a left $R$-module $M$ is called (left) homogeneous if
     \begin{enumerate}
  \item[H1] If $Rx=Ry$ then ${\bf w}(x)= {\bf w}(y)$ for all $x,y\in M$.
  \item[H2] 
 There exists a real number $\gamma$ 
  such that
    \begin{eqnarray}
  \sum_{y\in Rx}{\bf w}(y)  = \gamma \, |Rx| \:\:\forall \:\:0 \neq x \in M .
\end{eqnarray}
\end{enumerate}  
\end{definition}
Right homogeneous weights are defined similarly.
The value $\gamma$ is the average weight of a cyclic left $R$-submodule. The homogeneity property says that this average value is independent 
A homogeneous weight function exists on any finite $R$-module and is unique up to choice of average weight $\gamma$ \cite{HN99}.

\begin{example}
Let $R=M=\F_q$. The Hamming weight is homogeneous on $M$ with $\gamma = \frac{q-1}{q}$.
\end{example}

\begin{example}
	Let $R=M={\mathbb Z}_4$. The Lee weight is homogeneous on $M$ with $\gamma = 1$. 
\end{example}

\begin{example}
	Let $R=M={\mathbb Z}_{p^k}$ for a prime $p$. The weight function defined by
	$$ {\bf w}(x) = \left\{ \begin{array}{cl}
	0             & \text{ if } x = 0 \\
	\frac{p}{p-1} & \text{ if } x \in p^{k-1} \Z_{p^k},\\
	1             & \text{ if otherwise,}
	\end{array} 
	\right.
	$$
	is homogeneous on $={\mathbb Z}_{p^k}$. 
\end{example}	

\begin{example}
   Let $R=M = \F_q^{2 \times 2}$. The weight function ${\bf w}$ on $M$ defined by
   $$ {\bf w}(x) = \left\{ \begin{array}{cl}
                       \frac{q^2 - q - 1}{q-1} & \text{ if } {\rm{rank}}_{\F_q}(x) = 2, \\
                       q                    & \text{ if } {\rm{rank}}_{\F_q}(x) = 1,\\
                       0                                      & \text{ if } x=0,
                   \end{array} 
             \right.
   $$
   is homogeneous with average value $\gamma = \frac{q^2-1}{q}.$  
   For the case $q=2$, this is the Bachoc weight \cite{bachoc}.
\end{example} 
  \begin{example} 
   For the case $R=\F_q, \;M=\F_q^{2 \times 2}$, the Hamming weight is homogeneous for $\gamma = \frac{q-1}{q}$.               
\end{example}

In \cite{GNW04}, the authors show that every finite unital ring $R$ has a quasi-Frobenius bimodule, which is unique up to right and left $R$-isomorphism if the sum of its minimal left submodules is cyclic both as a left and right $R$-module. Such a module is then called a Frobenius bimodule.

\subsection{Homogeneous Weights and Characters of a Bimodule}

With respect to the $R$-$R$ bimodule ${\cal A}$ we write $\hat{{\cal A}}$ to denote Hom$_{\mathbb Z}({\cal A},{\mathbb C}^{\times})$, the group of characters of the additive group of ${\cal A}$. Then $\hat{{\cal A}}$
is an $R$-$R$ bimodule according to the relations
$$^r{\chi}(x) = \chi(xr),\:\:\: \chi^r(x) = \chi(rx)$$ 
for all $r \in R, x \in {\cal A}$ and $\chi \in \hat{\cal A}$. 
A character $\chi \in \hat{{\cal A}}$ is called (left) generating
if given any $\phi \in \hat{R}$ there is some $r \in R$ satisfying 
$\phi = {^r}{\chi}$. This is equivalent to the property that $\ker \chi$ contains no non-zero
left $R$-submodule of ${\cal A}$. In \cite{W09} the authors show that every finite ring possesses a {\em quasi-Frobenius} bimodule and that this is moreover unique up to module isomorphism if its {\em socle} is cyclic as a module over $R$, in which case it is called a {\em Frobenius} bimodule. We refer the interested reader to that paper and the references given there for a detailed exposition. For the purposes of this paper, it is enough to observe the behaviour of the homogenous weight on such bimodules.  
    
         \begin{definition}
         The bimodule $_R{\cal A}_R$ is called a Frobenius bimodule if  
         $${_R} {\cal A} \cong {_R}\hat{R} \:\:\text{ and }\:\: {{\cal A}_R}  \cong \hat{R}_R. $$ 
         \end{definition}
     
This gives a construction of a Frobenius bimodule $_R{\cal A}_R $ for a finite ring $R$: choose ${\cal A}$ to be the character module $\hat{R}$ of $R$. Any other Frobenius $R$-$R$ bimodule is isomorphic to $\cal A =\hat{R}$. If $R$ is a Frobenius ring then ${\cal A}=R$ itself is the canonical choice.  
 
By duality, if $_R{\cal A}_R$ is Frobenius then ${_R} \hat{\cal A} \cong {_R}{R} \:\:\text{ and }\:\: {\hat{\cal A}_R}  \cong {R}_R. $ 
%We write $w,\dd$ to denote the Hamming weight and distance functions, respectively.
In particular, if $_R{\cal A}_R$ is a Frobenius bimodule then $\hat{\cal A}$ is generated by a character $\chi$ both as a left and as a right $R$-module. It is exactly this property that makes the homogeneous weight behave so nicely on a Frobenius ring or bimodule, and is the reason why we restrict to this (large) class of modules. 
	More precisely, the existence of such a generating character $\chi \in \hat{\cal A}$ gives the following characterisation of the homogeneous weight on a Frobenius bimodule (c.f. \cite{H00}).

\begin{lemma}\label{lemcharfrob}
  Let $_R{\cal A}_R$ be a Frobenius bimodule with generating character $\chi$. Then the weight function
  $${\bf w}: {\cal A} \longrightarrow {\mathbb R} : a \mapsto \gamma\left(1-\frac{1}{|R^{\times}|}\sum_{u\in R^{\times}}
  \chi(au)\right)$$ is homogeneous. 
\end{lemma}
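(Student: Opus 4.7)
The plan is to verify the two defining axioms of a homogeneous weight directly from the character formula: (H1) ${\bf w}(x)={\bf w}(y)$ whenever $Rx=Ry$, and (H2) $\sum_{y\in Rx}{\bf w}(y)=\gamma|Rx|$ for every $x\neq 0$.

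\textbf{Proving H2.} I would expand the definition and reduce to showing that for every nonzero $x$ and every $u\in R^\times$ one has $\sum_{y\in Rx}\chi(yu)=0$. The surjection $R\to Rx$, $r\mapsto rx$, has fibres of constant size, so it suffices to prove $\sum_{r\in R}\chi(rxu)=0$. But $r\mapsto\chi(rxu)$ is an additive character of $(R,+)$; it is trivial iff the left $R$-submodule $Rxu$ of ${\cal A}$ lies in $\ker\chi$. Since $xu\neq 0$, $Rxu$ is nonzero, and the left-generating hypothesis on $\chi$ forces $Rxu\not\subseteq\ker\chi$. Hence the character sums to zero, and plugging back yields $\sum_{y\in Rx}{\bf w}(y)=\gamma|Rx|$.

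\textbf{Proving H1.} I would split the argument into two pieces. Step~(i) is left-unit invariance: ${\bf w}(ax)={\bf w}(x)$ for every $a\in R^\times$. The key input is the Nakayama-type ring automorphism $\nu$ of $R$ supplied by the Frobenius-bimodule structure, characterised by $\chi(az)=\chi(z\,\nu(a))$ for all $z\in{\cal A}$; since $\nu$ is a ring automorphism, $w:=\nu(a)\in R^\times$. Applying this with $z=xu$ and then making the bijective substitution $v=uw$ on $R^\times$ gives
$$\sum_{u\in R^\times}\chi(axu)=\sum_{u\in R^\times}\chi(xuw)=\sum_{v\in R^\times}\chi(xv),$$
so ${\bf w}(ax)={\bf w}(x)$.

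Step~(ii) is the structural fact that if $Rx=Ry$ then $y=ax$ for some $a\in R^\times$. The MacWilliams-style annihilator bijection between cyclic left submodules of ${\cal A}$ and right ideals of $R$ forces the right annihilators of $x$ and $y$ to coincide. Choosing any $r\in R$ with $rx=y$, I would then use self-injectivity of ${\cal A}$ as a left $R$-module to lift $r$ to a unit $a=r+\ell$, where $\ell$ lies in the left annihilator of $x$ (so that $ax=y$ still holds). This lifting is the main technical obstacle of the whole proof; everything else is formal. Combining (i) and (ii) yields H1.
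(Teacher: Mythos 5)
The paper does not actually prove this lemma; it defers to Honold's characterisation of finite Frobenius rings \cite{H00}. Your argument reconstructs the standard proof from that literature, and it is essentially sound. The H2 computation is exactly right: the fibres of $R\to Rx$ have constant size, the character $r\mapsto\chi(rxu)$ is nontrivial because $R(xu)$ is a nonzero left submodule (as $u$ is a unit) and $\ker\chi$ contains none, so orthogonality kills the inner sum. For H1 your two-step decomposition is also the standard route: the Nakayama-type automorphism $\nu$ (whose existence and bijectivity use that $|\hat{{\cal A}}|=|R|$ and that a generating character of a Frobenius bimodule generates on both sides) converts the left unit action into a right translation of $R^{\times}$, and the remaining input is that $Rx=Ry$ forces $y=ax$ for some $a\in R^{\times}$. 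Two caveats on your step (ii). First, the auxiliary claim that the right annihilators of $x$ and $y$ coincide is neither needed nor true in general, and the annihilator correspondence plays no role here. Second, the unit-lifting does not come from self-injectivity of ${\cal A}$: the statement that $Rx=Ry$ implies $y\in R^{\times}x$ holds for any finite module over any finite ring, and the mechanism is purely ring-theoretic --- finite rings are semiperfect, hence semilocal, hence have stable range one (Bass), so the coset $r+\{\ell\in R:\ell x=0\}$, which together with that left ideal generates $R$, contains a unit. This fact is established in \cite{HN99} and used in \cite{GS04}; substituting that citation (or the stable-range argument) for your injectivity appeal closes the only genuine gap in the write-up.
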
 

\subsection{A Key Lemma}

For a positive integer $n$, word $z \in {\cal A}^n$ and set $X \subset \{1,...,n\}$ we define the projection of $z$ onto the coordinates not in $X$ by
$$\pi_{X}(z):=(z_i)_{i \notin X} \in {\cal A}^{n-|X|}.$$ 
In other words, $\pi_X(x)$ is obtained by puncturing $x$ on the coordinates of $X$. 

Given an $R$-submodule $M < {\cal A}^n$, we define the support of $M$ to be the set 
$$\supp(M):=\{ i : z_i \neq 0 \text{ for some } z\in M  \}.$$
%$\{i:  \pi_{\hat{i}}(c) \neq 0 \text{ some } c \in M\}$, where $\hat{i}:=\{1,...,k\} \backslash \{i\}$ for each $i$.

Using the character-theoretic description of the homogeneous weight given above we following result. The proof proceeds almost exactly as in \cite[Lemma 1]{bgks10}.

\begin{lemma}\label{lemavwsp}
   Let $_R{\cal A}_R$ be a Frobenius bimodule with homogenous weight function ${\bf w}:{\cal A} \longrightarrow {\mathbb R}$.
   Let $n$ be a positive integer, let $M$ be an $R$-submodule of ${\cal A}^n$ and let $x \in {\cal A}^n$. Then
   $$\sum_{c \in M} {\bf w}(x+c)  = \xy |M| |\supp(M)|+|M| {\bf w}(\pi_{\supp(M)}(x)).$$
\end{lemma}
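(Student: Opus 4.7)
My plan is to expand $\mathbf{w}(x+c)$ via the character formula of Lemma \ref{lemcharfrob}, interchange the sums over $c \in M$ and over coordinates $i \in \{1,\ldots,n\}$, and then split the outer sum according to whether $i$ lies in $\supp(M)$ or not. Concretely, using the additive extension of $\mathbf{w}$ to $\mathcal{A}^n$ and the generating character $\chi$, I would write
$$\sum_{c\in M}\mathbf{w}(x+c) = \sum_{i=1}^n \sum_{c\in M} \gamma\!\left(1 - \frac{1}{|R^\times|}\sum_{u\in R^\times}\chi((x_i+c_i)u)\right).$$
The inner sum over $c$ only sees the $i$-th coordinate of $c$, and the map $c \mapsto c_i$ surjects $M$ onto the submodule $M_i := \{c_i : c \in M\}$ of $\mathcal{A}$ with fibres of size $|M|/|M_i|$. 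For $i \notin \supp(M)$ one has $M_i = 0$, so every term reduces to $\mathbf{w}(x_i)$, contributing $|M|\,\mathbf{w}(\pi_{\supp(M)}(x))$ in total; this is the easy half.

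For $i \in \supp(M)$, the task reduces to showing that for every $u \in R^\times$,
$$\sum_{c\in M}\chi((x_i+c_i)u) \;=\; \chi(x_iu)\,\frac{|M|}{|M_i|}\sum_{a\in M_i}\chi(au) \;=\; 0,$$
which is equivalent to proving that the character $\psi_u : a \mapsto \chi(au)$ is nontrivial on $M_i$. This is the step I expect to carry the real content of the proof. Suppose $\psi_u$ vanishes on $M_i$, i.e.\ $M_i u \subseteq \ker\chi$. Since $M$ is a bi-submodule of $\mathcal{A}^n$, so is $M_i$, hence $M_i u = M_i$ because $u \in R^\times$. Then $M_i \subseteq \ker\chi$; but $\chi$ is a generating character of the Frobenius bimodule $\mathcal{A}$, so $\ker\chi$ contains no nonzero bi-submodule, forcing $M_i = 0$ and contradicting $i \in \supp(M)$. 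Hence $\sum_{a\in M_i}\psi_u(a) = 0$ as required.

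Plugging this back in, each $i \in \supp(M)$ contributes $\gamma\sum_{c\in M} 1 = \gamma|M|$, for a total of $\gamma|M|\,|\supp(M)|$. Adding the complementary contribution $|M|\,\mathbf{w}(\pi_{\supp(M)}(x))$ from the first paragraph yields the claimed identity. The only non-routine ingredient is the character argument above, which is where the Frobenius-bimodule hypothesis is essential; everything else is bookkeeping and mirrors the proof of \cite[Lemma 1]{bgks10}, with the role of left-submodules there replaced by bi-submodules to accommodate the two-sided action used in Lemma \ref{lemcharfrob}.
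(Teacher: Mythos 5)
Your proof is correct and follows essentially the same character-theoretic route as the argument the paper delegates to \cite[Lemma 1]{bgks10}: expand $\mathbf{w}$ via the generating character, split coordinates by $\supp(M)$, and annihilate the character sums over the nonzero coordinate projections $M_i$ using the fact that $\ker\chi$ contains no nonzero submodule. (A minor simplification: you do not even need $M_iu=M_i$, since $M_iu$ is itself a nonzero left submodule contained in $\ker\chi$, contradicting the left-generating property directly.)
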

In particular, if the module $M$ has full support then the average homogeneous weight of any coset of $(M,+)$ has the same constant value $\xy  n$.

Unless stated otherwise, for the remainder we will assume that ${\cal A}$ is a Frobenius $R$-$R$ bimodule and that ${\bf w}$ denotes the homogeneous weight on ${\cal A}^k$ for any positive integer $k$.

\section{Upper Bounds on Codes in Quotient Modules}\label{sec:main}

For the remainder of Section \ref{sec:main} we fix the following notation. Let $n$ be a positive integer, let 
%$f: {\cal A}^n \longrightarrow {\cal A}^N$ be an $R$-linear epimorphism of bimodules 
and let $K$ be an submodule of of the $R$-$R$ bimodule ${\cal A}^n$. % be the kernel of $f$ in ${\cal A}^n$.
%Let $C$ be a subset of ${\cal A}^N$. The preimage of an element $x \in {\cal A}^N$ is the set 
%$f^{-1}(x) = \{u \in {\cal A}^n: f(u)=x \}$ is a coset of $(K,+)$ in ${\cal A}^n$ and the preimage of the code $C$ is 
Let $M$ be a union of cosets of $K$ in ${\cal A}^n$ and let 
$$C:=\{ u + K : u \in M\} \subset {\cal A}^n / K.$$
%$$M:=f^{-1}(C) = \{u \in {\cal A}^n: f(u) \in C \},$$ which is a union of cosets of $K$. 
Let $\hat{\bf w}$ be the weight function on ${\cal A}^n / K$ induced by ${\bf w}$ with respect to $K$,
so $\hat{\bf w}(x) = {\bf w}(u+K) := \min\{ {\bf w}(v) : v \in u+K \}$. %for any $u \in f^{-1}(x).$
Let $d$ be the minimum distance of $C$ with respect to the weight function $\hat{\bf w}$ that is,
$$ d = \min \{ \hat{\bf w}(x-y) : x,y \in C, x\neq y \} = \min \{ {\bf w}(u-v) : u,v \in M, u-v \notin K\}.$$
We furthermore fix 
$$s:=|\supp(M)| \text{ and }\ell:=|\supp(K)|.$$
Then $|{\cal A}|^{s}\geq |M|,|{\cal A}|^{\ell}\geq |K| .$

Now $\supp(K) \subset \supp(M)$, so clearly, $s \geq \ell$.  
If we puncture $M$ and $K$ on the coordinates of $M$ not in $\supp(M)$,
then the resulting punctured subsets $M'$ and $K'$ of ${\cal A}^s$ satisfy $|M|=|M'|$ and $|K|=|K'|$.
Let $C'=\{u+K':u \in M'\}\subset {\cal A}^s / K' $. 
Since $|M|=|C||K|$ and $|M'|=|C'||K'|$, we have $|C|=|C'|$. For this reason, we assume throughout the next two subsections that $M$ has full support and so $s=n$, since otherwise we can simply derive upper bound on $|C'|$, which has full support. In the application to network coding, we cannot assume that $n=s$, but for the purposes of the bounds we derive here, there is no loss of generality in making this assumption.

We now present two new upper bounds on $|C|$.
Our results are generalizations of the Plotkin bound and of the Elias-Bassalygo bound for linear block codes and invoke these bounds for codes constructed as quotient modules. As in the classical case, the Plotkin-like bound is applicable for codes with high minimum distance ($d > \gamma s$) and the Elias-Bassalygo bound is applicable for code with minimum distance upper-bounded by $\gamma s$.
Our results hold in particular for codes defined over finite fields (and are new in this case) with respect to the metric induced by the Hamming weight, but of course holds for a  much larger class of alphabets. 

\subsection{A Plotkin-Like Upper Bound}

Following the usual argument for the classical Plotkin bound, we obtain lower and upper
bounds on $\displaystyle{\sum_{c,c' \in C} \hat{\dd}(c,c')}$.
Proposition 2.1 of \cite{GS04} gives a Plotkin bound for codes over finite Frobenius rings with respect to the homogeneous weight, which has the following easy extension for a code in ${\cal A}^s$.

\begin{lemma}\label{lemgsplotkin}
	Let ${\cal C} \subset {\cal A}^s$ have support size $s$ and have minimum homogeneous distance $\dd({\cal C})$. Then
	$$ |{\cal C}|(|{\cal C}|-1) \dd({\cal C}) \leq \sum_{x,y \in {\cal C}} {\bf w}(x-y) \leq \xy s |{\cal C}|^2.$$
\end{lemma}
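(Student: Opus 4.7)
The plan is to prove the two inequalities separately. The lower bound is essentially definitional: the double sum $\sum_{x,y\in\cc} \bw(x-y)$ splits into the $|\cc|$ diagonal terms (each contributing $\bw(0)=0$) and the $|\cc|(|\cc|-1)$ ordered pairs with $x\neq y$. Each of these off-diagonal terms satisfies $\bw(x-y)\geq \dd(\cc)$ by definition of minimum distance, giving the stated lower bound immediately.

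For the upper bound I would use the character-theoretic description of the homogeneous weight supplied by Lemma \ref{lemcharfrob}. Fix a generating character $\chi \in \hat{\A}$ of the Frobenius bimodule $\A$, and write, for each coordinate $i$ and each pair $(x,y) \in \cc^2$,
$$\bw(x_i-y_i) = \xy\left(1 - \frac{1}{|R^\times|}\sum_{u\in R^\times}\chi((x_i-y_i)u)\right).$$
Summing over $(x,y)\in\cc\times\cc$ and using the multiplicativity $\chi((x_i-y_i)u) = \chi(x_iu)\overline{\chi(y_iu)}$, the inner double sum becomes a squared modulus:
$$\sum_{x,y\in\cc}\bw(x_i-y_i) = \xy|\cc|^2 - \frac{\xy}{|R^\times|}\sum_{u\in R^\times}\left|\sum_{x\in\cc}\chi(x_iu)\right|^2 \leq \xy|\cc|^2,$$
since the character sum is non-negative. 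Adding these coordinate-wise contributions over $i=1,\ldots,s$ yields the desired upper bound $\xy s|\cc|^2$; observe that the bound on each coordinate holds regardless of whether the coordinate is in $\supp(\cc)$, so the full-support hypothesis is only used for tightness in the subsequent applications.

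I do not anticipate a serious obstacle here; the argument is the standard Plotkin averaging after one passes to the character formulation, and is exactly the route Greferath and Schmidt take in \cite{GS04} for Frobenius rings. The only bookkeeping to watch is that we are summing over ordered pairs in $\cc^2$ (so the sum is naturally $|\cc|^2$-scaled, not $\binom{|\cc|}{2}$-scaled), and that the lemma treats $\cc$ as an arbitrary subset, not a module, so Lemma \ref{lemavwsp} is not invoked directly; rather, the character-sum non-negativity plays the role that the exact averaging identity of Lemma \ref{lemavwsp} plays for genuine submodules.
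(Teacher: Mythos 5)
Your proof is correct. The paper itself gives no argument for this lemma, simply citing Proposition 2.1 of \cite{GS04} and calling the extension to a code in ${\cal A}^s$ ``easy''; your two halves---the count of $|{\cal C}|(|{\cal C}|-1)$ ordered off-diagonal pairs for the lower bound, and the coordinatewise character-sum expansion with the squared modulus discarded for the upper bound---are exactly the standard argument from that source, and the same character manipulation the paper itself carries out in the proof of Theorem \ref{th:cr}. Your observation that the full-support hypothesis is not actually needed for the upper bound is also accurate.
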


We are now ready to prove our first main result, which relies on the Lemma \ref{lemgsplotkin} (the Plotkin bound for the homogeneous weight) and on Lemma \ref{lemavwsp}, which computes the average homogeneous weight of a coset.

\begin{theorem}[Plotkin Bound]\label{thplotkin}
	\begin{eqnarray}\label{eq:plot}
	(|C| - 1) d  \leq \left( |C| s   - \ell \right) \xy.
	\end{eqnarray}
	If $d > \xy s$ then $$\displaystyle{|C| \leq \frac{d - \xy \ell}{d-\xy s}}.$$
\end{theorem}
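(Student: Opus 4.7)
The plan is to mimic the classical Plotkin double-counting argument, estimating the pairwise homogeneous-weight sum $\Sigma := \sum_{u,v \in M} {\bf w}(u - v)$ from above, and subtracting off the contribution $\Sigma_{=}$ from pairs lying in the same coset of $K$, so that what remains, $\Sigma_{\neq}$, can be compared to $|C|(|C|-1)|K|^2 d$ on one side (via the minimum distance) and to a Plotkin-type quantity on the other (via average weights).

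First I would establish the lower bound. For any $u,v \in M$ with $u-v \notin K$ one has ${\bf w}(u-v) \geq \hat{\bf w}((u-v)+K) \geq d$. Since $M$ is a disjoint union of $|C|$ cosets of $K$ and each coset has size $|K|$, the number of such ordered pairs is $|C|(|C|-1)|K|^2$, and therefore $\Sigma_{\neq} \geq |C|(|C|-1)|K|^2 d$.

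For the upper bound on $\Sigma$, I would write $M = \bigsqcup_{x \in C}(c_x + K)$ and, after substituting $k := k_1 - k_2$, evaluate the inner sum $\sum_{k \in K}{\bf w}((c_x - c_y) + k)$ by Lemma \ref{lemavwsp} as $\gamma|K|\ell + |K|{\bf w}(\pi_{\supp(K)}(c_x - c_y))$; note that $\pi_{\supp(K)}$ is well-defined on cosets of $K$, since elements of $K$ vanish outside $\supp(K)$. Assembling the pieces yields
$$\Sigma = \gamma|C|^2|K|^2\ell + |K|^2 \sum_{x,y \in C} {\bf w}(\pi_{\supp(K)}(c_x - c_y)).$$
I would then bound the residual sum by $\gamma(s-\ell)|C|^2$ via Lemma \ref{lemgsplotkin} applied to the projected collection $\{\pi_{\supp(K)}(c_x) : x \in C\} \subset {\cal A}^{s-\ell}$, giving $\Sigma \leq \gamma s |C|^2 |K|^2$. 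Next, the same-coset sum $\Sigma_{=} = |C|\sum_{k_1,k_2 \in K}{\bf w}(k_1-k_2)$ is evaluated exactly as $|C|\gamma|K|^2\ell$ by another application of Lemma \ref{lemavwsp} (with argument $0$, and using the change of variable $k = k_1 - k_2$). Subtracting gives $\Sigma_{\neq} \leq \gamma|C||K|^2(|C|s - \ell)$, and combining with the lower bound and cancelling the common factor $|C||K|^2$ produces $(|C|-1)d \leq (|C|s - \ell)\gamma$. Rearranging when $d > \gamma s$ isolates $|C|$ and yields the second inequality.

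The main obstacle I anticipate is the Plotkin step for the projected collection $\{\pi_{\supp(K)}(c_x)\}_{x \in C}$: this map need not be injective on $C$, so one really needs a multiset version of Lemma \ref{lemgsplotkin}. Fortunately the character-theoretic proof (via Lemma \ref{lemcharfrob}) goes through coordinate by coordinate with multiplicities $f_i(a) := |\{x \in C : \pi_{\supp(K)}(c_x)_i = a\}|$ in place of indicator functions, since the essential non-negativity $|\hat{f_i}(u)|^2 \geq 0$ for the appropriate Fourier transform of $f_i$ still holds. Thus the obstacle is a technical annoyance rather than a genuine difficulty.
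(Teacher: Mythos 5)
Your proof is correct and follows essentially the same double-counting argument as the paper: bound the cross-coset pairwise weight sum below by $|C|(|C|-1)|K|^2 d$, above via Lemma \ref{lemgsplotkin} and Lemma \ref{lemavwsp}, and cancel $|C||K|^2$. The only difference is that the paper sidesteps your multiset worry entirely by applying Lemma \ref{lemgsplotkin} in one stroke to the full-support set $M \subset {\cal A}^s$ to get $\sum_{u,v\in M}{\bf w}(u-v)\leq \gamma s|M|^2$, rather than evaluating $\Sigma$ coset-by-coset and then invoking a Plotkin bound on the projected multiset $\{\pi_{\supp(K)}(c_x)\}$ (though your fix for that is sound and is the same multiplicity computation the paper itself uses in Theorem \ref{th:cr}).
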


\begin{proof}
	We give an estimate of the sum of the distances between ordered pairs of distinct codewords of $C$.
	\begin{eqnarray*}
		|C|(|C| - 1) d  & \leq & \sum_{x,y \in C} \hat{\bf w}(x-y), \\
		& = &  \sum_{u+K,v+K \in C} {\bf w}(u-v+K), \\
		%& \leq & \sum_{f(x),f(y) \in C} {\rm av} \{ \dd(x',y'):x'-x, y'-y \in {\cal K}_t\}\\
		& \leq & \sum_{u+K,v+K \in C} \frac{1}{|K|} \sum_{k \in K} {\bf w}(u-v+k), \\
		& = & \frac{1}{|K|^2} \sum_{ \begin{tiny}\begin{array}{c}u,v \in M: \\ u-v \notin K \end{array}\end{tiny}} {\bf w}(u-v),
	\end{eqnarray*}
	\begin{eqnarray*}
		& = & \frac{1}{|K|^2} \left( \sum_{u,v \in M} {\bf w}(u-v) -  \sum_{\begin{tiny}\begin{array}{c}u,v \in M: \\ u-v \in K \end{array}\end{tiny}} {\bf w}(u-v) \right), \\
		& = & \frac{1}{|K|^2} \left( \sum_{u,v \in M} {\bf w}(u-v) - |C| |K| \sum_{z \in K} {\bf w}(z) \right), \end{eqnarray*}
	We compute the left-hand sum using Lemma  \ref{lemgsplotkin} and the right-hand sum using Lemma \ref{lemavwsp}, which yields: 
	\begin{eqnarray*}
		|C|(|C| - 1) d & \leq & |C|^2 s \xy   -|C| \ell \xy,
	\end{eqnarray*}
	from which we get (\ref{eq:plot}).
	In the case that $d > \xy s$ we rearrange (\ref{eq:plot}) to obtain the following upper bound on the size of $C$:
	$$|C| \leq \frac{d - \xy \ell}{d-\xy s}.$$
\end{proof}

\begin{example}
	Let $R = {\cal A} = \Z_4$. The Lee weight $\dd_L$ is homogeneous on $\Z_4$ with $\xy=1$.
	Let $K = \langle 0111333 \rangle \subset \Z_4^7$, which has support size $6$. Then the distance function $\hat{\bf d}_L$ on $\Z_4^7 / K$ induced by $\dd_L$
	is the minimum Lee weight of each coset of $K$ in $\Z_4^7$. 
	If $M \subset \Z_4^7$ has support size 7 and $C = \{x + K : x \in M\} \subset \Z_4^7 / \langle 0111333 \rangle$, 
	then the Plotkin bound tells us that
	$|C| \leq (\hat{\bf d}_L(C) - 6)/(\hat{\bf d}_L(C) - 7)$. Then the maximum size of any such code $C$ with $\hat{\bf d}(C)=8$ is 2.
	%Let $$M=1022012+ \langle 0111333 \rangle \cup  3331321 + \langle 0111333 \rangle.$$
	Let 
	$$M=\{1 0 2 2 0 1 2, 1 2 0 0 2 3 0,1 3 1 1 1 2 3,1 1 3 3 3 0 1 \} \cup \{3 3 3 1 3 2 1,3 1 1 3 1 0 3,3 0 0 2 2 1 0,3 2 2 0 0 3 2 \},$$
	which is a union of 2 cosets of $K$.
	Then $$C=\{1022012+ \langle 0111333 \rangle , 3331321 + \langle 0111333 \rangle\},$$ 
	has cardinality 2 and minimum distance $8$, since 
	%$\hat{\dd}_L(1022012+ \langle 0111333 \rangle, 3331321 +\langle 0111333 \rangle) = 8 .$
	the minimum Lee distance between elements of the different cosets is $8$.
	Therefore, $C$ is optimal with respect to Theorem \ref{thplotkin}. 
	
	For a concrete realization of $C$ as a submodule of $\Z_4^6$, let $f$ be a $\Z_4$-homomorphism of $\Z_4^7$ that has
	$K$ as its kernel. 
	For example, say $f$ is the map with following matrix representation in $\Z_4^{7 \times 6}$.
	$$\left(\begin{array}{c}
	1 0 0 0 0 0\\
	0 1 0 0 0 0\\
	0 0 1 0 0 0\\
	0 0 0 1 0 0\\
	0 0 0 0 1 0\\
	0 0 0 0 0 1\\
	0 1 1 1 3 3\\
	\end{array}\right) $$  
	Then the elements of $C$ are in 1-1 correspondence with the elements of 
	${\bar C}=f(M) =  \{120023,300221 \} \subset \Z_4^6$.
	The Lee weight on $\Z_4^7$ along with the $\Z_4$-submodule $K$ induces 
	an isometry between $(\Z_4^7/K,\hat{\dd}_L)$ and $(\Z_4^6,\bar{\dd}_L)$
	%a weight on $\Z_4^6 = {\rm Im} f$ such that the weight of the word $f(x)$
	whereby $\bar{\dd}_L(f(x)) = \hat{\dd}_L(x)$.
\end{example}

\begin{remark}
	If $\ell=s$ then the inequality (\ref{eq:plot}) implies that $d \leq s \xy$.
	On the other hand, if $K$ is trivial then $\ell = 0$ and so Theorem \ref{thplotkin} is the Plotkin bound for the homogeneous weight \cite[Theorem 2.2]{GS04}. 
\end{remark}

\subsection{An Elias Bassalygo Bound for Quotient Modules}\label{sec:elias}

%The results of this section rely on \cite{GS04}, applied to $C$, 
We now give an upper bound on $|C|$ for the case $d < \gamma s$.
%We may assume that $\supp(M) = \{1,...,s\}$. Since $\supp(K)\subset \supp(M)$, for simplicity, for the remainder of Section \ref{sec:elias}, we identify $K$ with the submodule and $M$ with the subset of ${\cal A}^n$  obtained by
%deleting coordinates in $\{s+1,...,n\}$, of $K$ and $M$ respectively. We furthermore identify $C$ with its punctured counterpart in ${\cal A}^s/K$. This essentially amounts to the assumption that $n=s$, i.e. that $M$ has full support.  
We recall the following well-known lemma (see, for example \cite[Lemma 5.2.9]{vL99}).
\begin{lemma}\label{lem:AB}
	Let $M$ be an additive group and let $A, B$ be subsets of $M$. Then there exists $x \in M$ such that
	$$\displaystyle{|B|\leq\frac{|(x + A) \cap B||M|}{|A|} }.$$ 
\end{lemma}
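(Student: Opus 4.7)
The plan is to prove this via a standard double-counting / averaging argument on pairs $(x,b) \in M \times B$. Rearranging the inequality, the claim is equivalent to the existence of some $x \in M$ with
$$|(x+A) \cap B| \;\geq\; \frac{|A|\,|B|}{|M|},$$
which I would obtain by showing that the average of $|(x+A) \cap B|$ over all $x \in M$ equals exactly $|A|\,|B|/|M|$, so that at least one $x$ meets the average.

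More concretely, first I would compute $\sum_{x \in M} |(x+A) \cap B|$ by swapping the order of summation. For each fixed $b \in B$, the condition $b \in x+A$ is equivalent to $x \in b - A$, and there are exactly $|A|$ such $x \in M$ (translation is a bijection on $M$). Summing over $b \in B$ yields $|A|\,|B|$. Dividing by $|M|$ gives the average value $|A|\,|B|/|M|$, and by the pigeonhole principle some $x \in M$ must realize a value at least this large. Multiplying through by $|M|/|A|$ and rearranging gives the stated bound on $|B|$.

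There is really no hard step here: the only thing to be careful about is that $M$ is a (finite) additive group so that translation $y \mapsto y - a$ is a bijection of $M$, ensuring the count $|\{x \in M : b - x \in A\}| = |A|$ holds exactly. Finiteness of $M$ is tacitly used when invoking the averaging argument; in the applications to codes over ${\cal A}^n$ in this paper, $M = {\cal A}^n$ is finite, so this is not an issue.
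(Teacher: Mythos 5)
Your proof is correct and is the standard averaging/double-counting argument; the paper itself gives no proof, merely citing van Lint (Lemma 5.2.9), and your argument is exactly the one that reference uses. Your remark about finiteness of $M$ being needed for the averaging step is an appropriate caveat, and it is satisfied in all uses in the paper.
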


For each nonnegative real number $r$ we define
$$\hat{B}^{{\rm av}} (r):= \{ z \in {\cal A}^{s}/K : \frac{1}{|K|}\sum_{x \in z+K} {\bf w}(x) \leq r \},$$
which is the set of words $z$ such that the average homomgeneous weight in the coset $z+K$ is at most $r$.

For a positive integer $k$, we denote by $B^k(r) \subset {\cal A}^k$ the standard sphere of radius $r>0$ about zero with respect to the homogeneous weight $\bf w$ on ${\cal A}$, that is $$B^k(r) := \{ z \in {\cal A}^k :{\bf w}(z) \leq r \}.$$ Clearly for $R=\F_q={\cal A}$, this is the usual Hamming sphere.
Setting $A$ to be a translate of $\hat{B}^{{\rm av}} (r)$ and $B=C$ we deduce from Lemma \ref{lem:AB} that 
\begin{equation}\label{eqe2}
| C| \leq 
\frac{|\hat{B}^{{\rm av}}  (r) \cap C| |{\cal A}|^s}{|\hat{B}^{{\rm av}}  (r)||K|} .
\end{equation}
For each 
vector $x \in {\cal A}^{s}$, 
we define 
$$ \pi(x):=  (x_i)_{i \notin \supp K} \in {\cal A}^{s-\ell} \text{ and }
\pi'(x):= (x_i)_{i \in \supp K} \in {\cal A}^{\ell}.$$
Observe that if $x+K=y+K$ for some $x,y \in {\cal A}^s$ then $\pi(x)=\pi(y)$.

\begin{corollary}\label{coravball}
	Let $r \geq \ell \xy$. Then
	$$\hat{B}^{{\rm av}} (r)= \{ z + K\in {\cal A}^{s}/K :  {\bf w}(\pi(z)) \leq r - \ell \xy \}.$$
\end{corollary}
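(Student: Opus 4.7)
The plan is to apply Lemma \ref{lemavwsp} directly, taking the submodule there to be $K$ itself and the translate to be an arbitrary $z \in {\cal A}^s$. That lemma gives
\begin{equation*}
\sum_{c \in K} {\bf w}(z+c) \;=\; \gamma\, |K|\,|\supp(K)| \,+\, |K|\,{\bf w}\bigl(\pi_{\supp(K)}(z)\bigr).
\end{equation*}
Since $|\supp(K)|=\ell$ and, by the definitions introduced immediately before the statement, the puncturing $\pi_{\supp(K)}(z)$ agrees with $\pi(z)=(z_i)_{i\notin \supp K}$, dividing by $|K|$ yields the clean identity
\begin{equation*}
\frac{1}{|K|}\sum_{x\in z+K} {\bf w}(x) \;=\; \gamma\ell + {\bf w}(\pi(z)).
\end{equation*}

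From here the corollary is immediate: $z+K \in \hat{B}^{\rm av}(r)$ iff $\gamma\ell + {\bf w}(\pi(z)) \le r$, equivalently ${\bf w}(\pi(z)) \le r-\gamma\ell$. The hypothesis $r \ge \gamma\ell$ ensures that the right-hand inequality is meaningful (has a solution, in particular $z=0$); if $r<\gamma\ell$ then both sides of the claimed equality are empty, so the statement would be vacuously true anyway.

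The key step is thus the observation that the average weight computation required by the definition of $\hat{B}^{\rm av}(r)$ is exactly a coset sum of the form handled by Lemma \ref{lemavwsp}, and that the punctured vector appearing there coincides with $\pi(z)$. Once the notation is aligned, no further work is needed; there is no serious obstacle, only a bookkeeping check that the two puncturing conventions (``coordinates in $\supp K$'' versus ``coordinates not in $\supp K$'') match up correctly between the lemma and the definition of $\pi$. Thus I would present the proof as a short two-line derivation.
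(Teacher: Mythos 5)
Your proposal is correct and follows essentially the same route as the paper: both apply Lemma \ref{lemavwsp} with the submodule taken to be $K$, divide by $|K|$ to identify the coset average as $\gamma\ell + {\bf w}(\pi(z))$, and then unwind the definition of $\hat{B}^{\rm av}(r)$. The bookkeeping check that $\pi_{\supp(K)}$ coincides with $\pi$ is exactly the well-definedness observation the paper also makes, so nothing is missing.
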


\begin{proof}
	Since any $x,y \in {\cal A}^s$ in the same coset of $K$ satisfy $\pi(x)=\pi(y)$, we see that the set on the right-hand-side is well defined.
	Let $z+K \in {\cal A}^{s}/K$. Applying Lemma \ref{lemavwsp}, $z +K\in\hat{B}^{{\rm av}} (r)$ if and only if 
	$$r\geq \frac{1}{|K|}\sum_{x \in  z+K} {\bf w}(x)=\xy \ell+{\bf w}(\pi(x)),$$
	which is true if and only if ${\bf w}(\pi(z)) \leq r-\xy \ell$.
\end{proof}

\begin{corollary}\label{cor:balls}
	Let $r \geq \xy\ell $. Then
	$$|\hat{B}^{{\rm av}} (r)|=|B^{s-\ell}(r-\xy \ell)||\A|^{\ell}/|K|$$%\frac{|\A|^{\ell}}{|K|}.$$
\end{corollary}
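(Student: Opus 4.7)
The plan is to combine Corollary \ref{coravball} with a straightforward fiber-counting argument. By that corollary, a coset $z+K$ lies in $\hat{B}^{\rm av}(r)$ precisely when $\bw(\pi(z)) \leq r - \xy\ell$; since $\pi(z) \in \A^{s-\ell}$ depends only on the coset $z+K$, the problem reduces to counting the cosets $z+K$ whose projection $\pi(z)$ lies in the homogeneous ball $B^{s-\ell}(r-\xy\ell) \subset \A^{s-\ell}$.

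Next I would stratify these cosets by the value $y = \pi(z) \in B^{s-\ell}(r-\xy\ell)$ and count, for each fixed $y$, how many distinct cosets $z+K$ satisfy $\pi(z)=y$. The fiber $\pi^{-1}(y) \subset \A^s$ consists of all vectors whose coordinates outside $\supp K$ match $y$ and whose coordinates on $\supp K$ are arbitrary, so $|\pi^{-1}(y)| = |\A|^{\ell}$. The key observation is that every element of $K$ vanishes off $\supp K$, so $\pi(K)=\{0\}$; consequently each fiber $\pi^{-1}(y)$ is stable under translation by $K$, and hence decomposes as a disjoint union of cosets of $K$, each of cardinality $|K|$. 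Thus $\pi^{-1}(y)$ contains exactly $|\A|^{\ell}/|K|$ distinct cosets of $K$, a number independent of $y$.

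Summing this uniform count over all $y \in B^{s-\ell}(r-\xy\ell)$ then yields
$$|\hat{B}^{\rm av}(r)| \;=\; |B^{s-\ell}(r-\xy\ell)| \cdot \frac{|\A|^\ell}{|K|},$$
which is the claimed identity. There is no real obstacle: the argument is essentially bookkeeping. The only points that deserve explicit verification are that $\pi$ descends to a well-defined map on $\A^s/K$ and that the translation action of $K$ on each fiber is free, so that $|\A|^\ell/|K|$ is an integer; both follow immediately from the containment $K \subseteq \{x \in \A^s : \pi(x)=0\}$, which is a consequence of the definition of $\supp K$.
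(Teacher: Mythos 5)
Your proof is correct and follows essentially the same route as the paper: both reduce via Corollary \ref{coravball} to counting cosets over each point of $B^{s-\ell}(r-\xy\ell)$, and both obtain the uniform fiber count $|\A|^{\ell}/|K|$ (the paper phrases it as counting cosets of $K'=\pi'(K)$ in $\A^{\ell}$, which is the same decomposition of the fiber $\pi^{-1}(y)$ into cosets of $K$). No gaps.
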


\begin{proof}
	Let $x \in \A^s$. %Then
	%$\f_t(x) = \f_t(y)$ if and only if $x - y \in {\cal K}_t$, which implies $\pi_t(x) = \pi(y)$.
	From Corollary \ref{coravball}, $x+K$ is contained in $\hat{B}^{{\rm av}} (r)$ if and only if $\pi(x) \in B^{s-\ell}(r-\xy \ell)$.
	Let $K':=\{ \pi'(z): z \in K \} \subset \A^{\ell}$. Clearly $|K'|=|K|$,
	so there are exactly $|\A|^{\ell}/|K|$ distinct cosets of $K'$ in $\A^{\ell}$. Then 
	for each $u \in B^{s-\ell}(r-\xy \ell)$, there are exactly $|\A|^{\ell}/|K|$ distinct elements
	$z + K \in \A^{s}/K$ satisfying $\pi(z)=u$. 
\end{proof}

Applying (\ref{eqe2}), along with Corollary \ref{cor:balls} we now have the following result.

\begin{theorem}\label{th:el1} Let $ r - \xy \ell > 0 $. Then
	\begin{equation}\label{eqe3}
	|C| \leq \frac{| \hat{B}^{\rm av}(r) \cap C||\A|^{s-\ell}  }{|B^{s-\ell}(r-\xy \ell)| }.
	\end{equation}
\end{theorem}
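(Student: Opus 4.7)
The plan is to derive Theorem \ref{th:el1} by directly combining the two ingredients already set up just before the statement: the covering-type inequality (\ref{eqe2}) and the cardinality formula of Corollary \ref{cor:balls}. Inequality (\ref{eqe2}) itself was obtained by applying Lemma \ref{lem:AB} in the additive group $\A^s/K$, with $A$ taken to be a translate of $\hat{B}^{\rm av}(r)$ and $B=C$, so there is no need to rerun the averaging argument at this point.

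Concretely, I would carry out the following steps. First, verify that the assumption $r-\xy\ell > 0$ of the theorem is strong enough to invoke Corollary \ref{cor:balls}, which gives
\[
|\hat{B}^{\rm av}(r)| \;=\; \frac{|B^{s-\ell}(r-\xy\ell)|\,|\A|^{\ell}}{|K|}.
\]
Second, substitute this expression into the denominator of (\ref{eqe2}). The factor $|K|$ in the denominator of Corollary \ref{cor:balls} cancels against the factor $|K|$ already present in (\ref{eqe2}), and the remaining ratio $|\A|^{s}/|\A|^{\ell}$ collapses to $|\A|^{s-\ell}$, producing precisely the right-hand side of (\ref{eqe3}).

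There is essentially no obstacle at this stage — all of the real work has been front-loaded. The averaging over translates lives in Lemma \ref{lem:AB}; the identification of $\hat{B}^{\rm av}(r)$ with the preimage under $\pi$ of a homogeneous ball of radius $r - \xy\ell$ in $\A^{s-\ell}$ lives in Corollary \ref{coravball}; and the counting of cosets of $K'$ inside $\A^{\ell}$ that yields the $|\A|^{\ell}/|K|$ multiplicity lives in Corollary \ref{cor:balls}. The closest thing to a subtle point is making sure $\hat{B}^{\rm av}(r)$ is interpreted consistently as a subset of the quotient $\A^s/K$ in both (\ref{eqe2}) and Corollary \ref{cor:balls}; once that bookkeeping is done, the theorem is a one-line substitution.
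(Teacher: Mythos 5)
Your proposal is correct and matches the paper exactly: the paper introduces Theorem \ref{th:el1} with the sentence ``Applying (\ref{eqe2}), along with Corollary \ref{cor:balls} we now have the following result,'' i.e.\ precisely the substitution and cancellation of $|K|$ that you describe. The arithmetic checks out ($|\A|^{s}/|\A|^{\ell}=|\A|^{s-\ell}$ after the two $|K|$ factors cancel), and the hypothesis $r-\xy\ell>0$ indeed suffices for Corollary \ref{cor:balls}.
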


We now obtain an upper bound on the size of $\hat{B}^{\rm av} (r) \cap C$.
First we define the anti-code
$$C(r):= \hat{B}^{\rm av} (r) \cap C.$$ 
Clearly, $C(r)$, being a subset of $C$, has minimum induced distance $\hat{\dd}(C(r)) \geq d$.

We denote by $C^{\pi}(r)$ the multiset of words in $\A^{s-\ell}$ constructed by applying the puncturing map $\pi$ to exactly one word from each distinct coset $x+K$ in $C(r)$, that is
$$C^{\pi}(r):=\{ \pi(x_1),...,\pi(x_{|C(r)|}) : x_i-x_j \notin K \text{ if } i \neq j\},$$
and the sum of the multiplicities of the members of $C^\pi(r)$ is $|C(r)|$. 
We will compute a lower bound on the sum of the distances between pairs of elements of $C^\pi(r)$, counting multiplicities.

\begin{theorem}\label{th:lbw} 
	Let $r\geq\xy \ell$. Then
	$$ |C(r)|(|C(r)|-1)(d-\xy \ell) \leq \sum_{u,v \in C^{\pi}(r)}  {\bf w}(u-v).   $$
\end{theorem}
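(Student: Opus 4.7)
The plan is to show that any two distinct cosets $x+K,y+K$ in $C(r)$ have punctured difference $\pi(x-y)\in \A^{s-\ell}$ of weight at least $d-\xy\ell$. Summing over all ordered pairs in the multiset $C^\pi(r)$ and discarding the diagonal (which contributes $0$) then gives the result. The one nontrivial ingredient is relating $\hat{\bf w}(x-y+K)$ to ${\bf w}(\pi(x-y))$, and this is where Lemma \ref{lemavwsp} enters.

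First I would decompose weights along $\pi$ and $\pi'$. Since $\bf w$ is additive over coordinates, for any $z\in\A^s$ one has ${\bf w}(z)={\bf w}(\pi(z))+{\bf w}(\pi'(z))$. Moreover, every $k\in K$ is supported on $\supp K$, so $\pi(k)=0$ and $\pi(z+k)=\pi(z)$, while $\pi'(z+k)=\pi'(z)+\pi'(k)$. Therefore
\begin{equation*}
\hat{\bf w}(z+K)\;=\;\min_{k\in K}{\bf w}(z+k)\;=\;{\bf w}(\pi(z))\,+\,\min_{k'\in K'}{\bf w}(\pi'(z)+k'),
\end{equation*}
where $K'=\{\pi'(k):k\in K\}\subset \A^\ell$ is exactly the submodule introduced in Corollary \ref{cor:balls}.

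Next, I would bound the second term by $\xy\ell$. Viewed inside $\A^\ell$, the submodule $K'$ has full support $\{1,\dots,\ell\}$, so applying Lemma \ref{lemavwsp} to $M:=K'$ and any $x\in\A^\ell$ shows that the average homogeneous weight of each coset of $K'$ in $\A^\ell$ equals $\xy\ell$. In particular, the minimum weight of any such coset is at most $\xy\ell$, so $\min_{k'\in K'}{\bf w}(\pi'(z)+k')\le \xy\ell$, and hence
\begin{equation*}
{\bf w}(\pi(z))\;\ge\;\hat{\bf w}(z+K)-\xy\ell.
\end{equation*}
Applied to $z=x-y$ with $x+K\ne y+K$ in $C(r)\subseteq C$, the right-hand side is at least $d-\xy\ell$.

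Finally I would sum over ordered pairs in $C^\pi(r)$. By construction $C^\pi(r)$ is a multiset of $|C(r)|$ punctured coset representatives; for two representatives from distinct cosets the bound just proved gives ${\bf w}(u-v)\ge d-\xy\ell$, while the $|C(r)|$ diagonal terms contribute zero. Summing over the $|C(r)|(|C(r)|-1)$ off-diagonal ordered pairs yields
\begin{equation*}
\sum_{u,v\in C^\pi(r)}{\bf w}(u-v)\;\ge\;|C(r)|\bigl(|C(r)|-1\bigr)(d-\xy\ell),
\end{equation*}
as required. The only subtle step is Step 2: recognizing that, because $K'$ has full support in $\A^\ell$, Lemma \ref{lemavwsp} forces the coset minimum to be controlled by the coset average $\xy\ell$. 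Everything else is bookkeeping with the decomposition ${\bf w}={\bf w}\circ\pi+{\bf w}\circ\pi'$ and the fact that $C(r)$ inherits the minimum distance $d$ from $C$.
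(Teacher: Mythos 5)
Your proof is correct and is essentially the paper's argument: both hinge on using Lemma \ref{lemavwsp} to show that the minimum weight of a coset of $K$ is at most its average, giving $\hat{\bf w}(z+K)\le \xy\ell+{\bf w}(\pi(z))$, and then comparing with the lower bound $d$ over ordered pairs of distinct cosets. The only cosmetic difference is that you establish the pointwise inequality ${\bf w}(\pi(x-y))\ge d-\xy\ell$ first and then sum (which is in effect the paper's subsequent lemma on $\dd(C^\pi(r))$), whereas the paper carries out the same estimate directly inside the double sum.
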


\begin{proof}
	Using Lemma \ref{lemavwsp}, we obtain
	\begin{eqnarray*}
		|C(r)|(|C(r)|-1)d & \leq &  \sum_{x+K,y+K \in C(r)} \hat{\bf w}(x-y+K) \\
		& \leq &  |C(r)|(|C(r)|-1)\xy \ell + \sum_{x+K,y+K \in C(r)}  {\bf w}(\pi(x-y)) ,\\
		&   =  &  |C(r)|(|C(r)|-1)\xy \ell +  \sum_{u,v \in C^{\pi}(r)}  {\bf w}(u-v).  
	\end{eqnarray*}
\end{proof}

%We now seek an upper bound on $ \sum_{u,v \in C^{\pi}_t}  {\bf w}(u-v),$ which we obtain
%by applying the standard argument used in the classical Elias bound. 

\begin{lemma} Let $r\geq\xy \ell$. Then 
	$\dd(C^{\pi}(r)):=\min \{\dd(x,y): x,y \in C^{\pi}(r), x\neq y  \} \geq d - \xy \ell$.
\end{lemma}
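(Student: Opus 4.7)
The plan is to show that any two distinct elements of the multiset $C^{\pi}(r)$ differ in homogeneous weight by at least $d-\gamma\ell$, by lifting the bound back to a statement about cosets of $K$ in $\mathcal{A}^s$. So fix distinct $u,v \in C^{\pi}(r)$ with $u = \pi(x)$ and $v = \pi(y)$ for coset representatives $x+K,\; y+K \in C(r)$. I would first observe that $u \neq v$ forces $x+K \neq y+K$, since any two representatives of the same coset have equal $\pi$-values; hence the minimum distance hypothesis on $C$ gives $\hat{\bf w}(x-y+K) \geq d$.

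The crux of the argument is to relate ${\bf w}(\pi(x-y))$ to $\hat{\bf w}(x-y+K)$. My plan is to invoke Lemma \ref{lemavwsp} with the submodule $K$ (of support size $\ell$) playing the role of $M$: for any $z \in \mathcal{A}^s$,
$$\frac{1}{|K|}\sum_{k\in K} {\bf w}(z+k) \;=\; \gamma\ell + {\bf w}(\pi(z)).$$
Since the minimum of a finite collection of real numbers cannot exceed its average, this yields $\hat{\bf w}(z+K) \leq \gamma\ell + {\bf w}(\pi(z))$. Specialising to $z = x-y$ and using the $R$-linearity of the coordinate projection $\pi$ (so that $\pi(x-y) = \pi(x)-\pi(y) = u-v$) gives
$${\bf w}(u-v) \;\geq\; \hat{\bf w}(x-y+K) - \gamma\ell \;\geq\; d - \gamma\ell,$$
which is the required lower bound.

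I do not anticipate substantive technical obstacles: the proof reduces to a two-line computation once the averaging identity of Lemma \ref{lemavwsp} is brought to bear. The only conceptual point worth emphasising is the elementary ``average-dominates-minimum'' inequality, which is what allows one to trade the induced weight $\hat{\bf w}$ on the quotient for the ordinary weight ${\bf w}$ on the punctured coordinates. A minor bookkeeping point to verify is that the case $u=v$ as vectors (which can arise in the multiset when two distinct cosets of $K$ happen to agree outside $\supp K$) is excluded from the definition of $\dd(C^{\pi}(r))$, so the multiset structure causes no difficulty in the argument.
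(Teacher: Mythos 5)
Your proof is correct and is essentially the paper's argument: both hinge on combining the minimum-at-most-average observation with Lemma \ref{lemavwsp} to get $\hat{\bf w}(x-y+K)\leq \gamma\ell+{\bf w}(\pi(x)-\pi(y))$, the only cosmetic difference being that the paper first splits the induced weight exactly as $\hat{\bf w}(\pi'(x-y)+K')+{\bf w}(\pi(x)-\pi(y))$ and then bounds the first term by $\gamma\ell$, whereas you average over the whole coset of $K$ in one step. Your closing remarks on why $u\neq v$ forces distinct cosets and on the multiset convention are accurate and harmless.
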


\begin{proof}
	As before, let $K':=\{ \pi'(z): z \in K \} \subset \A^{\ell}$, which is simply the projection of $K$ onto its own support.
	Let $x+K,y+K \in C(r)$ satisfy $\pi(x) \neq \pi(y)$.
	Then
	\begin{eqnarray*}
		d \leq \hat{\dd}(x+K,y+K)  &  =  & \hat{\bf w}(x-y+K) \\
		&  =  & \hat{\bf w}(\pi'(x-y)+K') +{\bf w}(\pi(x)-\pi(y)) \\
		&\leq & \xy \ell + {\bf w}(\pi(x)-\pi(y)).
	\end{eqnarray*}
	In particular $\dd(C^\pi(r)) \geq d - \xy \ell.$
\end{proof}

We now obtain an upper bound on the sum of the weights of differences of members of $C^\pi(r)$, adapting the argument used in the classical Elias-Bassalygo bound.

\begin{theorem}\label{th:cr}
	%$$\sum_{u,v \in C^\pi(r)} {\bf w}(u-v) \leq 2|C(r)|^2 (r-\xy \ell) - |C(r)|^2 \frac{(r-\xy \ell)^2}{\xy (n-\ell)}. $$
	$$\sum_{u,v \in C^\pi(r)} {\bf w}(u-v)= |C(r)|^2(r-\xy \ell)\left( 2 - \frac{r-\xy \ell}{\xy(s-\ell)} \right)$$
\end{theorem}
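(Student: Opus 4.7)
The plan is a coordinate-wise Plotkin-type computation on the multiset $C^\pi(r) \subset {\cal A}^{s-\ell}$, using the character description of the homogeneous weight from Lemma \ref{lemcharfrob} together with two applications of Cauchy--Schwarz that exploit the weight bound ${\bf w}(u) \leq r - \xy\ell$ valid for each $u \in C^\pi(r)$ by Corollary \ref{coravball}. Set $N := |C(r)|$, $m := s-\ell$, $w := r-\xy\ell$, and for each coordinate $i \in \{1,\dots,m\}$ and $a \in {\cal A}$ let $N_{i,a}$ be the number of $u \in C^\pi(r)$ (counted with multiplicity) with $u_i = a$, so that $\sum_a N_{i,a} = N$.

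First, decompose the double sum coordinate-wise and substitute ${\bf w}(x) = \xy(1-\eta(x))$, where $\eta(x) := |R^\times|^{-1}\sum_{u \in R^\times}\chi(xu)$ is real-valued (via the substitution $u \mapsto -u$ on $R^\times$):
\[
\sum_{u,v \in C^\pi(r)} {\bf w}(u-v) \;=\; \xy m N^2 \;-\; \xy\sum_{i=1}^{m} S_i,
\]
where $S_i := \sum_{a,b} N_{i,a}N_{i,b}\eta(a-b) = |R^\times|^{-1}\sum_{u \in R^\times} \bigl|\sum_a N_{i,a}\chi(au)\bigr|^2 \geq 0$. Expanding $\sum_u {\bf w}(u)$ in the same way gives $\sum_u {\bf w}(u) = \xy m N - \xy\sum_i T_i$ with $T_i := \sum_a N_{i,a}\eta(a)$, and the constraint $\sum_u {\bf w}(u) \leq N w$ rearranges to $\sum_i T_i \geq N(m - w/\xy)$, a nonnegative quantity in the Elias regime $w \leq \xy m$ (equivalently $r \leq \xy s$).

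Now apply Cauchy--Schwarz twice. The identity $T_i = |R^\times|^{-1}\sum_{u \in R^\times} Y_{i,u}$ with $Y_{i,u} := \sum_a N_{i,a}\chi(au)$ gives $T_i^2 \leq S_i$; then Cauchy--Schwarz across the $m$ coordinates gives $\sum_i T_i^2 \geq (\sum_i T_i)^2/m$, so that
\[
\sum_i S_i \;\geq\; \tfrac{1}{m}\bigl(N(m-w/\xy)\bigr)^2.
\]
Substituting back into the coordinate decomposition and simplifying algebraically yields
\[
\sum_{u,v \in C^\pi(r)} {\bf w}(u-v) \;\leq\; \xy\Bigl(mN^2 - \tfrac{N^2}{m}(m - w/\xy)^2\Bigr) \;=\; N^2 w\left(2 - \tfrac{w}{\xy m}\right),
\]
which is the claimed expression upon restoring $w = r-\xy\ell$ and $m = s-\ell$.

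The main obstacle is controlling the direction of the second Cauchy--Schwarz step: the bound $\sum_i T_i^2 \geq (\sum_i T_i)^2/m$ is only useful when $\sum_i T_i \geq 0$, which is guaranteed precisely by the Elias hypothesis $r \leq \xy s$ that delineates the range of validity; the first Cauchy--Schwarz step relies on $T_i$ being real, which is what the involution argument on $R^\times$ secures. The remainder of the proof is algebraic simplification of the expression $mN^2 - N^2(m - w/\xy)^2/m = (N^2 w/\xy)(2 - w/(\xy m))$.
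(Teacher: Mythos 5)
Your argument is correct and is essentially the paper's own proof: the same coordinate-wise multiplicity decomposition, the same character-theoretic expression of the homogeneous weight from Lemma \ref{lemcharfrob}, and the same two Cauchy--Schwarz applications combined with the weight budget ${\bf w}(\pi(z))\leq r-\xy\ell$ from Corollary \ref{coravball}; your replacement of the paper's monotonicity-of-$2|C(r)|T-T^2/(\xy(s-\ell))$ step by the observation that $\sum_i T_i\geq |C(r)|\left(s-\ell-\frac{r-\xy\ell}{\xy}\right)\geq 0$ under $r\leq\xy s$ is only a cosmetic repackaging of the same fact. Note that, exactly like the paper's proof, you establish the inequality $\leq$ under the hypotheses $\xy\ell\leq r\leq\xy s$ (the equality sign and the missing hypotheses in the theorem statement are evidently slips in the paper), which is all that is used in Corollary \ref{cor:elias}.
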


\begin{proof}
	We may assume that $\supp(K) = \{s-\ell+1,...,s\}$.
For each $i \in \{1,...,s\}$, let $m_i(u)$ be the multiplicity of the element $u \in {\cal A}$ 
    in the $i$th coordinate of $C^\pi(r)$. %Clearly, $m_i(u)=0$ whenever $i \notin \supp(M)$. 
    Therefore,
    \begin{eqnarray*}
    \sum_{u,v \in C^\pi(r)} {\bf w}(u-v)& =& \sum_{i=1}^{s-\ell} \sum_{a,b \in {\cal A}}w(a-b)m_i(a)m_i(b) .\\
    \end{eqnarray*}
    Since ${\cal A}$ is a Frobenius bimodule the homogeneous weight takes the form given in Lemma \ref{lemcharfrob}, therefore, for each $i\in \{1,...,s-\ell\}$ we have
    \begin{eqnarray*}
    \sum_{a,b \in {\cal A} }{\bf w}(a-b)m_i(a)m_i(b)& =& \xy \sum_{a,b \in {\cal A}} 
    \left( 1 - \frac{1}{|R^\times|} \sum_{\theta \in R^\times }\chi(( a-b)\theta) \right)m_i(a)m_i(b) \\
    & = & \xy |C(r)|^2 - \xy\sum_{\theta \in R^\times } \sum_{a,b \in {\cal A}} \frac{1}{|R^\times|} \chi(( a-b)\theta))m_i(a)m_i(b) \\
    & = & \xy |C(r)|^2 -\frac{ \xy}{|R^\times|}\sum_{\theta \in R^\times }\left| \sum_{a \in {\cal A}} \chi( a\theta )m_i(a)\right|^2
    \end{eqnarray*}
    From the Cauchy-Schwarz inequality we have
    \begin{eqnarray*}
    	\sum_{\theta \in R^\times }\left| \sum_{a \in {\cal A}} \chi( a\theta )m_i(a)\right|^2& \geq & \frac{1}{|R^\times|}\left|\sum_{\theta \in R^\times } \sum_{a \in {\cal A}} \chi( a\theta )m_i(a)\right|^2 
    	 =  |R^\times| \left| \sum_{a \in {\cal A}}m_i(a)\frac{1}{|R^\times|}\sum_{\theta \in R^\times }  \chi( a\theta )\right|^2,\\
    	& = & |R^\times| \left( \sum_{a \in {\cal A}}m_i(a)\left(1-\frac{{\bf w}(a)}{\xy}\right)\right)^2
    	=  |R^\times| \left( |C(r)| -\frac{1}{\xy}\sum_{a \in {\cal A}}m_i(a){\bf w}(a)\right)^2.
    \end{eqnarray*}
    We therefore arrive at the inequality
    \begin{eqnarray*}
    	\sum_{a,b \in {\cal A} }{\bf w}(a-b)m_i(a)m_i(b)& \leq & \xy \left( |C(r)|^2 - \left( |C(r)| -\frac{1}{\xy}\sum_{a \in {\cal A}}m_i(a){\bf w}(a)\right)^2  \right),\\
    	& = & 2|C(r)| \sum_{a \in {\cal A}}m_i(a){\bf w}(a)- \frac{1}{\xy} \left( \sum_{a \in {\cal A}}m_i(a){\bf w}(a) \right)^2.
    \end{eqnarray*}
Again, using the Cauchy-Schwarz inequality we have that
\begin{eqnarray*}
    \sum_{i=1}^{s-\ell} \left( \sum_{a \in {\cal A}}m_i(a){\bf w}(a) \right)^2 & \geq & 
    \frac{1}{s-\ell} \left( \sum_{i=1}^{s-\ell} \sum_{a \in {\cal A}}m_i(a){\bf w}(a) \right)^2,
\end{eqnarray*}
from which we deduce that
\begin{eqnarray*}
\sum_{u,v \in C^\pi(r)} {\bf w}(u-v)& \leq & 2|C(r)|\sum_{i=1}^{s-\ell}  \sum_{a \in {\cal A}}m_i(a){\bf w}(a) - \frac{1}{\xy(s-\ell)} \left( \sum_{i=1}^{s-\ell} \sum_{a \in {\cal A}}m_i(a){\bf w}(a) \right)^2,\\
& = & 2|C(r)|\sum_{u\in C^\pi(r)} {\bf w}(u) - \frac{1}{\xy(s-\ell)}\left(\sum_{u\in C^\pi(r)} {\bf w}(u)\right)^2.
\end{eqnarray*}
Let $T:= \sum_{u\in C^\pi(r)} {\bf w}(u) .$ 
Recall that $z \in C(r)$ only if the average homogeneous weight of the elements of $z+K$ is at most $r$, in which case we have ${\bf w}(\pi(z)) \leq r-\gamma \ell$. It follows that $T \leq |C(r)| (r-\gamma \ell)$.
Since $2|C(r)|T - \frac{T^2}{\xy(s-\ell)}$ is increasing on $[0,|C(r)|\xy(s-\ell)]$, if $r \leq \xy s$ then $|C(r)|(r-\xy \ell)$ falls within this range, so we may write
\begin{eqnarray*}
	\sum_{u,v \in C^\pi(r)} {\bf w}(u-v)& \leq & 2|C(r)|^2(r-\xy \ell) -\frac{1}{\xy(s-\ell)}|C(r)|^2(r-\xy \ell)^2\\
	      & = & |C(r)|^2(r-\xy \ell)\left( 2 - \frac{r-\xy \ell}{\xy(s-\ell)} \right)
\end{eqnarray*}      
\end{proof}

\begin{corollary}\label{cor:elias}
	If
	$r \leq \xy s $ and $(r-\xy\ell)^2 -\xy(s-\ell)(2r-d-\xy\ell) > 0$ then
	$$|C(r)| \leq h(r,s,\ell,d):=\frac{ (d-\xy \ell)\xy (s-\ell)}{(r-\xy\ell)^2 -\xy(s-\ell)(2r-d-\xy\ell)}. $$
\end{corollary}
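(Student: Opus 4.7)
My plan is to derive the bound by chaining Theorems \ref{th:lbw} and \ref{th:cr}, which together sandwich the quantity $\sum_{u,v \in C^{\pi}(r)}{\bf w}(u-v)$. The hypothesis $r \leq \xy s$ in the corollary is precisely what is needed to legitimize the application of Theorem \ref{th:cr}, so combining that upper bound with the lower bound of Theorem \ref{th:lbw} yields
\[
|C(r)|(|C(r)|-1)(d-\xy\ell) \leq |C(r)|^{2}(r-\xy\ell)\Bigl(2-\frac{r-\xy\ell}{\xy(s-\ell)}\Bigr).
\]
Both sides are quadratic in $|C(r)|$ with different leading coefficients, so after dividing by a factor of $|C(r)|$ (the case $C(r)=\emptyset$ is trivial) what remains is a linear inequality in $|C(r)|$.

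I would then move all terms involving $|C(r)|$ to one side, obtaining an inequality of the form $|C(r)|\cdot E \leq d-\xy\ell$, with
\[
E \,=\, (d-\xy\ell)-(r-\xy\ell)\Bigl(2-\frac{r-\xy\ell}{\xy(s-\ell)}\Bigr).
\]
Clearing the common denominator $\xy(s-\ell)$ and grouping terms yields
\[
\xy(s-\ell)\cdot E \,=\, (r-\xy\ell)^{2}-\xy(s-\ell)(2r-d-\xy\ell),
\]
which is exactly the quantity whose positivity is the second hypothesis of the corollary. Hence $E>0$, division by $E$ preserves the inequality direction, and I arrive at
\[
|C(r)|\leq \frac{d-\xy\ell}{E}=\frac{(d-\xy\ell)\xy(s-\ell)}{(r-\xy\ell)^{2}-\xy(s-\ell)(2r-d-\xy\ell)}=h(r,s,\ell,d).
\]

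There is no real obstacle: the substantive work is packaged inside Theorems \ref{th:lbw} and \ref{th:cr}, and the two hypotheses appearing in the corollary are tailored precisely to (i) permit the invocation of Theorem \ref{th:cr}, and (ii) permit division by $E$ without flipping the inequality. The only thing that requires a moment's care is the algebraic simplification identifying $\xy(s-\ell)\cdot E$ with the denominator of $h$, but this is a routine common-denominator manipulation.
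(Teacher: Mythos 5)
Your proposal is correct and follows essentially the same route as the paper: combine the lower bound of Theorem \ref{th:lbw} with the upper bound of Theorem \ref{th:cr}, cancel a factor of $|C(r)|$, and rearrange, using the positivity hypothesis to divide by the coefficient $(r-\xy\ell)^2-\xy(s-\ell)(2r-d-\xy\ell)$ without reversing the inequality. The algebraic identification of $\xy(s-\ell)\cdot E$ with that denominator checks out, so nothing is missing.
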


\begin{proof}
	Combine Theorems \ref{th:lbw} and \ref{th:cr} to get
	$$ (|C(r)|-1)(d-\xy \ell) \leq |C(r)|(r-\xy \ell)\left( 2 - \frac{r-\xy \ell}{\xy(s-\ell)} \right),$$
	which gives
	$$|C(r)| ((r-\xy\ell)^2 -\xy(s-\ell)(2r-d-\xy\ell)) \leq (d-\xy \ell)\xy (s-\ell).$$
	Therefore, under the hypothesis of the theorem, we have
	\begin{equation*}
	   |C(r)|  \leq  \frac{ (d-\xy \ell)\xy (s-\ell)}{(r-\xy\ell)^2 -\xy(s-\ell)(2r-d-\xy\ell)}.
	\end{equation*}
\end{proof}

Finally, combining Theorems \ref{th:el1} and \ref{cor:elias} we obtain the Elias-Bassalygo bound for codes in quotient modules.

\begin{theorem}[Elias-Bassalygo Bound]\label{thelias}
	Let $r \leq \xy s $ and $(r-\xy\ell)^2 -\xy(s-\ell)(2r-d-\xy\ell) > 0$.
	Then
	$$|C| \leq    \frac{h(r,s,\ell,d) |\A|^{s-\ell} }{| B^{s-\ell}(r-\xy \ell)| } .$$

\end{theorem}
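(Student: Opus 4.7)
The proof is a direct combination of the two preparatory results of this subsection, Theorem \ref{th:el1} and Corollary \ref{cor:elias}. My plan is as follows. From Theorem \ref{th:el1}, which applies whenever $r - \xy\ell > 0$, I have the inequality
$$|C| \leq \frac{|\hat{B}^{\rm av}(r) \cap C| \cdot |\A|^{s-\ell}}{|B^{s-\ell}(r-\xy\ell)|},$$
and by definition the intersection in the numerator is exactly the anti-code $C(r)$. Thus the entire task reduces to supplying a good upper bound on $|C(r)|$.

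That upper bound is precisely what Corollary \ref{cor:elias} provides. Under the hypotheses of the present theorem, namely $r \leq \xy s$ together with the positivity condition $(r-\xy\ell)^2 - \xy(s-\ell)(2r-d-\xy\ell) > 0$, Corollary \ref{cor:elias} asserts $|C(r)| \leq h(r,s,\ell,d)$. Substituting this bound on $|C(r)|$ into the inequality above produces the claimed bound
$$|C| \leq \frac{h(r,s,\ell,d) \cdot |\A|^{s-\ell}}{|B^{s-\ell}(r-\xy\ell)|}.$$

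The only technical point to check is that the running hypothesis $r - \xy\ell > 0$ of Theorem \ref{th:el1} is compatible with the hypotheses of the current statement. This is not an obstacle in practice: the factor $|B^{s-\ell}(r-\xy\ell)|$ appearing in the denominator of the final bound is only meaningful for a positive radius, so $r > \xy\ell$ is the relevant regime, and the positivity condition $(r-\xy\ell)^2 - \xy(s-\ell)(2r-d-\xy\ell) > 0$ pins down the interesting range of $r$ between $\xy\ell$ and $\xy s$ where the bound is nontrivial. Since both auxiliary results have already been established, no new calculation is required and the theorem follows by mere assembly.
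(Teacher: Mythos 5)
Your proposal is correct and is exactly the paper's argument: the paper states the theorem as the immediate combination of Theorem \ref{th:el1} (applied with $B = C$ so that the numerator is $|C(r)|$) and Corollary \ref{cor:elias}, with no additional work. Your remark about the compatibility of the hypothesis $r - \xy\ell > 0$ is a reasonable observation that the paper leaves implicit.
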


\subsection{Code Optimality}\label{sec:opt}

We use $A({\cal A}:n,s,\ell,d)$ to denote the maximum size of any code $C$ in ${\cal A}^n/K$ with induced minimum homogeneous distance $d= \hat{\dd}(C)$ over all possible choices of submodules $K$ of ${\cal A}^n$ with support size $\ell$ and such that the support of
$M=\cup_{x \in C} (x+K)$ has size $s\leq n$. 

\begin{definition}
	We define ${A}({\cal A}:n,s,\ell,d):=$
	\begin{equation*}
	\max\{ |C| : {_RK}_R < {\cal A}, C \subset {\cal A}^n/K, \supp(K)=\ell,\supp(\cup_{x \in C} (x+K))=s \}
	\end{equation*}
\end{definition}

We define one more ball of positive radius $r$, again identifying $K$ with its punctured module in ${\cal A}^s$:
$$\hat{B}(r):=\{ x+K \in {\cal A}^s/K : \hat{\bf w}(z+K) \leq r  \}.$$ 
Clearly, if $2r \leq d-1$, then from the usual sphere-packing argument we have:
$$|C| \leq \frac{|{\cal A}|^s}{|K||\hat{B}(r)|}\leq \frac{|{\cal A}|^s}{|K||\hat{B^{\rm av}}(r)|},$$
since $\hat{B^{\rm av}}(\delta) \subset \hat{B}(r)$.
If $\delta \geq \xy \ell$, then using Corollary \ref{cor:balls} we get
$$|C| \leq \frac{|{\cal A}|^{s-\ell}}{|{B^{s-\ell}}(r-\xy \ell)|}.$$

We summarize the preceding as follows.

\begin{theorem}
	Let $n,s,\ell,d$ be nonnegative integers satisfying $n \geq s \geq \ell$. Then
	%\begin{equation*}
	$$
	{A}({\cal A}:n,s,\ell,d) \leq 
	\left\{ \begin{array}{cll}
	      \displaystyle{\frac{d - \xy \ell}{d-\xy s} }& \text{ if } & d > \xy s\\
	     \displaystyle{\frac{h(r,s,\ell,d) |\A|^{s-\ell} }{| B^{s-\ell}(r-\xy \ell)| }} 
	     & \text{ if }
	     &\begin{array}{l}
	        \xy s \geq r,d > \xy \ell,\\ 
	        (r-\xy\ell)^2 -\xy(s-\ell)(2r-d-\xy\ell) > 0\\
	     \end{array}\\
	     \displaystyle{\frac{|{\cal A}|^{s-\ell}}{|{B^{s-\ell}}(r-\xy \ell)|}} & \text{ if } & \frac{d-1}{2} \geq r > \xy \ell
	\end{array}\right.
	$$
	%\end{equation*}
\end{theorem}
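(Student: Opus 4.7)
The plan is to recognize that this theorem is a consolidation statement: each of the three displayed upper bounds on $A({\cal A}:n,s,\ell,d)$ is exactly the bound obtained by one of the three results already proved in Section~\ref{sec:main}, now phrased uniformly in terms of the supremum over all admissible submodules $K$ and codes $C$. So I would not redo any of the hard estimates; I would simply verify that the three bounds apply to every configuration contributing to $A({\cal A}:n,s,\ell,d)$.

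First I would fix an arbitrary admissible pair $(K,C)$ with $\supp(K)=\ell$, $\supp(\cup_{x\in C}(x+K))=s\leq n$ and $\hat{\dd}(C)\geq d$. As noted at the opening of Section~\ref{sec:main}, puncturing on the coordinates outside $\supp(M)$ does not change $|C|$, $|K|$, $\ell$ or $d$, so we may work inside ${\cal A}^s$ with $M$ of full support. This observation is exactly what makes the parameter $n$ drop out of every bound and justifies using results stated for codes of full support in ${\cal A}^s$.

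Next I would dispatch the three cases. For the regime $d>\xy s$, the bound $|C|\leq (d-\xy\ell)/(d-\xy s)$ is Theorem~\ref{thplotkin} applied directly to $(K,C)$. For the regime $\xy s\geq r$, $d>\xy\ell$, and $(r-\xy\ell)^2-\xy(s-\ell)(2r-d-\xy\ell)>0$, the bound is Theorem~\ref{thelias}, which was already obtained by combining Theorem~\ref{th:el1} with Corollary~\ref{cor:elias}. For the regime $(d-1)/2\geq r>\xy\ell$, the bound is the sphere-packing inequality derived in the paragraph preceding the theorem, using $\hat B^{\rm av}(r)\subset\hat B(r)$ together with Corollary~\ref{cor:balls} to rewrite $|K|\cdot|\hat B^{\rm av}(r)|$ as $|\A|^\ell\cdot|B^{s-\ell}(r-\xy\ell)|$.

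In each of the three cases, the bound holds for every admissible $(K,C)$, so it holds for the maximum, which by definition is $A({\cal A}:n,s,\ell,d)$. There is no real obstacle in the proof; the only thing to be careful about is the reduction to $s=n$, and the observation that in the Elias and sphere-packing cases one must have $r>\xy\ell$ so that the radius $r-\xy\ell$ used to define $B^{s-\ell}(r-\xy\ell)$ is positive and the denominators are meaningful. This is already built into the hypotheses of each case, so the proof reduces to a one-line citation of the three preceding results.
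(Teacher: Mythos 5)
Your proposal is correct and takes essentially the same route as the paper, which offers no separate argument beyond ``We summarize the preceding as follows'': the three cases are direct citations of the Plotkin bound (Theorem~\ref{thplotkin}), the Elias--Bassalygo bound (Theorem~\ref{thelias}), and the sphere-packing inequality derived in the immediately preceding paragraph via $\hat{B}^{\rm av}(r)\subset\hat{B}(r)$ and Corollary~\ref{cor:balls}, together with the full-support reduction stated at the start of Section~\ref{sec:main}. Your added remarks on passing from an arbitrary admissible pair $(K,C)$ to the maximum and on the positivity of the radius $r-\xy\ell$ are exactly the (routine) details the paper leaves implicit.
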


\subsection{Asymptotic Bounds}

Asymptotic versions of these bounds are expressed by finding upper bounds on:
$$\alpha(\A: \sigma,\lambda,\delta):= 
\lim_{n \rightarrow \infty}  \sup n^{-1} \log_{|\A|}\left({ A}({\cal A}:n,\sigma n,\lambda n,\delta n))\right).$$ 
As in the classical Hamming case, we will require an asymptotic expression for the size of the homogeneous sphere $B^k(\delta k)\subset \A^k$. This was essentially answered first in \cite{L94} and in a slightly different form (which we use here) in \cite[Theorem 4.1]{GS04} as follows:

\begin{lemma}
	For all $\delta \in [0,\gamma]$ there holds:
	$$ \lim_{k \rightarrow \infty}  \sup k^{-1} \log_{|\A|} |B^k(\delta k)| = \min \left\{ \log_{|\A|}  \left(\sum_{a \in \A} Z^{{\bf w}(a)-\delta}\right) : Z \in (0,1] \right\}.$$
\end{lemma}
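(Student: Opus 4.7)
The plan is to match the $\limsup$ against the stated minimum by proving the two inequalities separately: a Chernoff-type upper bound valid for every $Z \in (0,1]$, and a matching lower bound via the method of types and a tilted (Gibbs) distribution on $\A$.

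For the upper bound, I would fix $Z \in (0,1]$ and use the fact that $Z^t$ is non-increasing in $t$. Every $z \in \A^k$ with ${\bf w}(z) \leq \delta k$ then satisfies $Z^{{\bf w}(z)-\delta k} \geq 1$, hence
\[
|B^k(\delta k)| \;\leq\; \sum_{z \in \A^k} Z^{{\bf w}(z)-\delta k}.
\]
Additivity of the homogeneous weight on $\A^k$, ${\bf w}(z) = \sum_{i=1}^k {\bf w}(z_i)$, makes the right-hand sum factorize as $\bigl(\sum_{a \in \A} Z^{{\bf w}(a)-\delta}\bigr)^k$. Taking $k^{-1}\log_{|\A|}$ and then minimizing over $Z \in (0,1]$ gives the $\leq$ direction.

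For the matching lower bound, I would invoke the method of types. For each probability distribution $P$ on $\A$ whose entries are integer multiples of $1/k$, the number of $z \in \A^k$ of empirical type $P$ is the multinomial coefficient $k!/\prod_a (kP(a))!$, which by Stirling grows like $|\A|^{kH_{|\A|}(P)-o(k)}$, where $H_{|\A|}(P)$ is the base-$|\A|$ entropy. Restricting to types with $\mathbb{E}_P[{\bf w}]\leq\delta$ and maximizing $H_{|\A|}(P)$ over this slice, a Lagrange-multiplier (equivalently Gibbs variational) argument identifies the optimizer as the tilted distribution $P^{\ast}(a) = Z^{{\bf w}(a)}\big/\sum_{b \in \A} Z^{{\bf w}(b)}$, with $Z \in (0,1]$ determined by $\mathbb{E}_{P^{\ast}}[{\bf w}] = \delta$. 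A direct computation then gives $H_{|\A|}(P^{\ast}) = \log_{|\A|}\bigl(\sum_{a\in\A}Z^{{\bf w}(a)-\delta}\bigr)$ at that same $Z$, which is exactly the expression sitting inside the minimum.

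The main obstacle is justifying two boundary issues behind the lower bound. First, the minimum in the statement must be attained at some $Z \in (0,1]$ rather than at $Z>1$: this follows from the hypothesis $\delta \leq \gamma$ together with the observation that the uniform distribution on $\A$ (the case $Z=1$) has expected homogeneous weight $\gamma$ by homogeneity property H2. Second, since ${\bf w}$ may take non-rational real values, the optimizing $P^{\ast}$ need not be realizable as a type over $k$ coordinates and $\delta k$ need not be attained exactly as a total weight; this is handled by approximating $P^{\ast}$ by rational types on a refining lattice and passing to the limit $k \to \infty$. Both points are standard applications of Cram\'er/Sanov large-deviation machinery, and the argument in \cite[Theorem 4.1]{GS04} for the Frobenius ring case carries through essentially verbatim once one notes that the homogeneous weight on a Frobenius bimodule has the same character-theoretic form (Lemma \ref{lemcharfrob}) used there.
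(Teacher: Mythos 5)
Your sketch is correct, but note that the paper does not prove this lemma at all: it is imported verbatim from Loeliger \cite{L94} and from \cite[Theorem 4.1]{GS04}, and your two-sided argument (the Chernoff-type bound $|B^k(\delta k)|\leq \bigl(\sum_{a\in\A}Z^{{\bf w}(a)-\delta}\bigr)^k$ for each $Z\in(0,1]$, matched by a method-of-types lower bound with the tilted distribution whose mean weight is $\delta$) is essentially the proof given in those references. The two boundary points you flag are indeed the only delicate steps, and both resolve as you indicate: convexity of $t\mapsto\sum_a e^{t({\bf w}(a)-\delta)}$ together with the fact that the uniform distribution on $\A$ has mean homogeneous weight $\gamma\geq\delta$ (its derivative at $t=0$ is $|\A|(\gamma-\delta)\geq 0$) places the minimizing $Z$ in $(0,1]$, and the rational approximation of types is standard.
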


\begin{definition}
	Let $\delta \geq 0$. We define the function
	$$ H_{\A}(\delta):=\min \left\{ \log_{|\A|}  \left(\sum_{a \in \A} Z^{{\bf w}(a)-\delta}\right) : Z \in (0,1] \right\}.$$ 
\end{definition}

%\subsubsection{An Asymptotic Plotkin Bound}

\begin{theorem}[Asymptotic Plotkin Bound]\label{th:asympplotkin}
	Let $\sigma,\lambda,\delta $ satisfy $0<\delta,0\leq \lambda < \sigma \leq 1$.
	$$
	\alpha(\A: \sigma,\lambda,\delta) \leq \left\{
	\begin{array}{cll}
	0 & \rm{ if } & \delta > \gamma \sigma \\  
	%\text{ for each } t \in {\cal T}  ,\\
	\sigma -\frac{\delta}{\xy} & \rm{ if } & \delta \leq \gamma \sigma
	%\text{ for some } t \in {\cal T} 
	\end{array}
	\right.
	$$
\end{theorem}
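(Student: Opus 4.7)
The plan is to reduce the asymptotic claim to the finite Plotkin estimate of Theorem~\ref{thplotkin} via the classical device of shortening. Writing $s = \sigma n$, $\ell = \lambda n$, $d = \delta n$ (rounded to integers as needed) and substituting into Theorem~\ref{thplotkin}, I would take $\log_{|{\cal A}|}$, divide by $n$, and let $n \to \infty$; the case split at $\delta = \gamma\sigma$ matches the hypothesis of that theorem exactly.

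For $\delta > \gamma\sigma$, the condition $d > \gamma s$ holds for all large $n$, and Theorem~\ref{thplotkin} directly bounds $|C|$ by $(\delta - \gamma\lambda)/(\delta - \gamma\sigma)$, an $n$-independent constant. Hence $n^{-1}\log_{|{\cal A}|}|C|\to 0$, giving $\alpha \leq 0$.

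For $\delta \leq \gamma\sigma$ the finite Plotkin bound is vacuous at the native parameters, so I would shorten as follows. Let $C \subset {\cal A}^n/K$ attain $A({\cal A}: n, s, \ell, d)$ and set $s' := \lceil d/\gamma\rceil - 1$, so $d > \gamma s'$ by construction. Pick a set $T$ of $s - s'$ coordinates in $\supp\bigl(\bigcup_{u+K\in C}(u+K)\bigr) \setminus \supp K$; since $T \cap \supp K = \emptyset$, the value of a coset on $T$ is well defined, so partitioning $C$ by these values and applying pigeonhole produces a subcode $C_0$ of size at least $|C|/|{\cal A}|^{s-s'}$. Puncturing $T$ then embeds $C_0$ as $C_0' \subset {\cal A}^{n-(s-s')}/K$, with $K$ identified unchanged in the shorter ambient module, induced minimum distance still at least $d$, and support of size at most $s'$. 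The identity ${\bf w}(u - v + k) = {\bf w}(\pi_T(u - v) + k)$, valid when $u, v$ agree on $T$ and $k \in K$ (with $K$ identified via $\pi_T$), ensures the puncturing is an isometry on the induced metric. Applying Theorem~\ref{thplotkin} to $C_0'$ gives $|C_0'| \leq (d - \gamma\ell)/(d - \gamma s') = O(n)$, whence $|C| \leq |{\cal A}|^{s-s'} O(n)$; taking $n^{-1}\log_{|{\cal A}|}$ and using $s - s' \sim (\sigma - \delta/\gamma) n$ delivers $\alpha \leq \sigma - \delta/\gamma$ in the limit.

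The main obstacle is executing the shortening carefully, especially in the boundary subrange $\delta \leq \gamma\lambda$, where the choice $s' = \lceil d/\gamma\rceil - 1$ drops below $\ell$ and $T$ cannot be taken disjoint from $\supp K$. This residual subrange would be handled by a small modification that also passes to a sub-bimodule of $K$ supported on a subset of $\supp K$ of size $s'$, thereby allowing the same pigeonhole/puncturing argument to run through at the cost of a kernel-trimming term that is absorbed into the $O(n)$ factor; alternatively one can observe that in this subrange $\sigma - \delta/\gamma \geq \sigma - \lambda$ and invoke a direct counting of cosets of $K$ with support in a set of size $s$, supplemented by the classical Plotkin argument applied to representatives.
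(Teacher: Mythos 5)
Your proposal is correct and follows essentially the same route as the paper: choose $s'$ just below $d/\gamma$, shorten/pigeonhole on $s-s'$ coordinates outside $\supp(K)$, and apply the finite Plotkin bound of Theorem~\ref{thplotkin} to the resulting code, whose polynomially bounded size disappears after taking $n^{-1}\log_{|\A|}$. The boundary subrange $\delta\le\gamma\lambda$ that you flag is indeed passed over silently in the paper (which simply asserts $s'>\ell$ "by our choice of $s'$"), and your first suggested fix --- trimming $K$ to a sub-bimodule of small support, in the extreme to $K=0$ by passing to coset representatives, which still have pairwise homogeneous distance at least $d$ --- does close it.
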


\begin{proof}
	If $\delta > \gamma \sigma$, then from Theorem \ref{thplotkin} we have that 
	$	\alpha(\A: \sigma,\lambda,\delta)=0$.
	
	Now suppose that $\delta \leq \gamma \sigma$. % for each $t \in {\cal T}$. %
	Let $C \subset {\cal A}^n/K$ be an optimal code and let $d=\delta n$.
	%in which case we get $\gamma \ell < \gamma s $
	Choose $s'$ to be the greatest integer satisfying $\gamma s'\leq d-1$. Then $s-s' > 0$
	and $s' > \ell$ by our choice of $s'$.     
	Consider the words of $M \subset \A^n$. By a standard coding theory argument we can take $s-s'$ 
	successive (affine) shortenings of $M$ on its coordinates in 
	$\supp(M)\backslash \supp(K)$ to arrive at a code in $\A^n$ of order at least 
	$|M|/|\A|^{s-s'}$. Then 
	puncture the code on these coordinates, as well as any coordinate not in $\supp (M)$, 
	to obtain a code $M' \subset \A^{s'}$ of the same order. The corresponding 
	code $K'$ obtained by puncturing $K$ on the same coordinates satisfies 
	$|K'| = |K|$ and so the set of words of $M'$ is a union 
	of at least $|C|/|\A|^{s-s'}$ distinct cosets of $K'$ in $\A^{s'}$. This yields a code $C'$ satisfying 
	$d(C')=d'\geq d > \gamma s'$. Again apply Theorem \ref{thplotkin} to get 
	$$\frac{|C|}{|\A|^{s-s'}} \leq |C'| \leq \frac{d - \gamma \ell}{d-\gamma s'} \leq d - \gamma \ell 
	= (\delta  -\gamma \lambda) n.$$
	Then 
	\begin{eqnarray*}
		\alpha(\A: \sigma,\lambda,\delta) & \leq &  
		\lim_{n \rightarrow \infty}  \sup n^{-1} \log_{|\A|}|\A|^{s-s'}(\delta - \gamma \lambda) n \\
		& \leq & \lim_{n \rightarrow \infty}  \sup \frac{s-s'}{n} + \frac{\log_{|\A|}(\delta - \gamma \lambda)n}{n}\\ 
		& \leq & \sigma -\frac{\delta}{\xy} + \lim_{n \rightarrow \infty} + \frac{\log_{|\A|}(\delta - \gamma \lambda)}{n}
		+\frac{\log_{|\A|} n}{n} =  \sigma -\frac{\delta}{\xy} 
	\end{eqnarray*}      
\end{proof}

%\subsubsection{An Asymptotic Elias Bound}

\begin{theorem}[Asymptotic Elias Bound]\label{th:asymelias} 
	Let $\rho ,\delta>0$ and let $\sigma,\lambda \in (0,1)$. Suppose that 
	$\xy \lambda \leq \rho,\delta \leq \xy \sigma$ and further that
	$ \rho < \gamma \sigma - \sqrt{\xy (\sigma - \lambda)(\xy \sigma - \delta)} $.
	%For each positive integer $n$ define
	%\begin{eqnarray*}
	%	h_n(u,v):=   \frac{h(\rho,\sigma,u,v) |\A|^{n(1-u)} }{| B^{n(1-u)}(n(\rho -\xy u)| }.
	%\end{eqnarray*}
	Let
	$\displaystyle{\xi: = \frac{\rho -\xy \lambda}{\sigma-\lambda}}$. Then
	\begin{eqnarray*}
		\lim_{n \rightarrow \infty}  \sup n^{-1} \log_{|\A|}\left({ A}({\cal A}:n,\sigma n,\lambda n,\delta n))\right) & \leq & \sigma - \lambda - H_{\A}(\xi).
	\end{eqnarray*}
	In particular,
	$$ \alpha(\A: \sigma,\lambda,\delta)  \leq \sigma - \lambda - H_\A\left( \xy- \sqrt{\frac{ \xy(\xy \sigma - \delta)}{\sigma-\lambda}} \right).$$
\end{theorem}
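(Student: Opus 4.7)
The plan is to apply the finite Elias--Bassalygo bound (Theorem~\ref{thelias}) with the scaled parameters $s=\sigma n$, $\ell=\lambda n$, $d=\delta n$, $r=\rho n$, take $\log_{|\A|}$ of both sides, divide by $n$, and pass to the limit $n\to\infty$, invoking the asymptotic volume formula for homogeneous balls stated just above the theorem.

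First I would verify that the positivity condition $(r-\xy\ell)^2 - \xy(s-\ell)(2r-d-\xy\ell) > 0$ required by Theorem~\ref{thelias} is equivalent, by completing the square in $r$, to $(r-\xy s)^2 > \xy(s-\ell)(\xy s - d)$. Under the assumptions $r \le \xy s$ and $\delta \le \xy\sigma$ this reduces to $\xy s - r > \sqrt{\xy(s-\ell)(\xy s - d)}$, which after scaling by $n$ becomes exactly the standing hypothesis $\rho < \xy\sigma - \sqrt{\xy(\sigma-\lambda)(\xy\sigma-\delta)}$. Thus Theorem~\ref{thelias} applies and gives
\begin{equation*}
|C| \le \frac{h(\rho n,\sigma n,\lambda n,\delta n)\,|\A|^{(\sigma-\lambda)n}}{|B^{(\sigma-\lambda)n}((\rho-\xy\lambda)n)|}.
\end{equation*}
Both the numerator and denominator of $h(r,s,\ell,d)$ are quadratic in $n$ with nonzero leading coefficients (by the previous rewriting), so $h=\Theta(1)$ and hence $n^{-1}\log_{|\A|} h \to 0$. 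The factor $|\A|^{(\sigma-\lambda)n}$ contributes $\sigma-\lambda$. For the ball in the denominator I would set $k=(\sigma-\lambda)n$ and observe that its radius $(\rho-\xy\lambda)n$ equals $\xi k$ with $\xi=(\rho-\xy\lambda)/(\sigma-\lambda)\in[0,\xy]$; the ball-volume lemma then yields $k^{-1}\log_{|\A|}|B^k(\xi k)|\to H_{\A}(\xi)$. Combining the three contributions gives the stated main inequality.

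For the ``in particular'' statement I would note that $H_{\A}$ is nondecreasing, so the bound is tightest when $\xi$ is maximized. Letting $\rho$ approach the upper boundary $\xy\sigma - \sqrt{\xy(\sigma-\lambda)(\xy\sigma-\delta)}$ permitted by the hypothesis, a short algebraic simplification yields $\xi \to \xy - \sqrt{\xy(\xy\sigma-\delta)/(\sigma-\lambda)}$, and continuity of $H_{\A}$ finishes the argument. The only substantive step is the completion-of-squares identity that rewrites the positivity hypothesis into the clean square-root form; everything else is routine asymptotic bookkeeping in which polynomial-in-$n$ factors (from $h$ and the subexponential correction in the ball-volume lemma) vanish after dividing by $n$.
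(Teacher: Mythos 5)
Your proposal is correct and follows essentially the same route as the paper: apply Theorem~\ref{thelias} with parameters scaled by $n$, kill the $h$ factor as $n^{-1}\log_{|\A|}h\to 0$, convert the ball volume via the asymptotic lemma into $H_{\A}(\xi)$, and let $\rho$ tend to its upper limit for the second inequality. The only difference is that you explicitly verify (by completing the square) that the hypothesis $\rho<\xy\sigma-\sqrt{\xy(\sigma-\lambda)(\xy\sigma-\delta)}$ is exactly the scaled positivity condition of Theorem~\ref{thelias}, a detail the paper leaves implicit.
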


\begin{proof}
	It is easy to see that $ n^{-1} \log_{|\A|}h(\rho,\sigma,u,v)  \rightarrow 0$ as $n\rightarrow 0$.
	From Theorem \ref{thelias}, for $\sigma,\lambda,\delta,\rho$ as above, we have
	\begin{eqnarray*}
	\alpha(\A: \sigma,\lambda,\delta)  & \leq & 
		\lim_{n \rightarrow \infty}  n^{-1} \log_{|\A|} \left(\frac{h(\rho,\sigma,\lambda,\delta)|\A|^{n(1-\lambda)} }{| B^{n(1-\lambda)}(n(\rho -\xy \lambda)| }  \right)\\
		& = & \sigma - \lambda - \lim_{n \rightarrow \infty} n^{-1} \log_{|\A|}(|B^{n(\sigma-\lambda)}(\xi(n(\sigma-\lambda)))|)\\
		& = & (\sigma-\lambda) - H_{\A}(\xi)
	\end{eqnarray*}
	Since this holds true for any choice of $\rho$, by continuity of $H_{\A}$, the final inequality holds.
	%where $\xi(n-\ell) = \xi n (1-\lambda)=r- \xy \ell = n(\rho - \xy \lambda).$
	%A routine calculation (e.g. by a slight modification to \cite[Lemma 2.10.3]{HP03}) gives 
	%\begin{eqnarray*}
	% \lim_{n \rightarrow \infty} n^{-1} \log_{|\A|}(|B^{n-\ell}(\xi(n-\ell))|)  = (1-\lambda)H_{|A|}(\xi).
	%\end{eqnarray*}
\end{proof}

Similarly, we have an asymptotic sphere-packing bound.

\begin{theorem}[Asymptotic Sphere Packing Bound]
	Let $\sigma,\lambda,\delta $ satisfy $0<\delta,0\leq \lambda < \sigma \leq 1$ and suppose that $\delta>2\gamma \lambda$. 
	Then $$\alpha(\A: \sigma,\lambda,\delta)  \leq \sigma -\lambda - H_\A\left( \frac{\delta -2 \xy \lambda }{2(\sigma-\lambda)}\right).$$ 
\end{theorem}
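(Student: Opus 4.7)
The plan is to invoke the non-asymptotic sphere-packing estimate collected in the summary theorem of Section \ref{sec:opt}: whenever $(d-1)/2 \geq r > \xy \ell$ one has
$${A}(\A: n,s,\ell,d) \leq \frac{|\A|^{s-\ell}}{|B^{s-\ell}(r - \xy \ell)|}.$$
I would set $s = \sigma n$, $\ell = \lambda n$, $d = \delta n$, and take $r = r_n := \lfloor (\delta n - 1)/2 \rfloor$. Since $\delta > 2 \xy \lambda$, for all sufficiently large $n$ we will have $(\delta n - 1)/2 \geq r_n > \xy \lambda n = \xy \ell$, so the sphere-packing estimate is applicable. This is the only place the hypothesis $\delta > 2\xy\lambda$ is used, and it is exactly what makes the radius $r_n - \xy\ell$ positive in the limit.

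Next I would apply $\log_{|\A|}$, divide by $n$, and pass to the limit, exactly mirroring the computations in the proofs of Theorems \ref{th:asympplotkin} and \ref{th:asymelias}. The ambient-volume term contributes
$$n^{-1} \log_{|\A|} |\A|^{s-\ell} = (s-\ell)/n \longrightarrow \sigma - \lambda,$$
while the ball is rewritten as $|B^{k}(\xi_n k)|$ with $k = (\sigma - \lambda) n$ and
$$\xi_n := \frac{r_n - \xy \ell}{s - \ell} \longrightarrow \xi := \frac{\delta - 2 \xy \lambda}{2(\sigma - \lambda)}.$$
The lemma preceding the definition of $H_\A$ gives $k^{-1} \log_{|\A|} |B^k(\xi k)| \to H_\A(\xi)$ as $k \to \infty$, and continuity of $H_\A$ (it is a pointwise infimum of affine functions of $\delta$, hence upper semi-continuous, and a short check shows it is continuous on the interval of interest) justifies replacing $\xi_n$ by its limit in this statement.

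Combining the two estimates yields the stated bound on $\alpha(\A:\sigma,\lambda,\delta)$. The technical care required is in the passage to the limit: verifying that $\xi_n \to \xi$ and that the double limit in $n$ (through both $k \to \infty$ and $\xi_n \to \xi$) can be exchanged so that $k^{-1} \log_{|\A|} |B^k(\xi_n k)| \to H_\A(\xi)$. This is the main nontrivial point, and it is handled exactly as in Theorem \ref{th:asymelias}, where the same issue arises for the radius $\rho n - \xy \ell$; here the argument is actually simpler because we do not need the auxiliary Elias-type counting of codewords inside a ball, only the direct sphere-packing inequality from Section \ref{sec:opt}.
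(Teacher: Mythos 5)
Your approach is exactly the one the paper intends: it gives no proof of this theorem at all (only the word ``Similarly''), and instantiating the third case of the summary theorem of Section \ref{sec:opt} with $r_n=\lfloor(\delta n-1)/2\rfloor$ and passing to the limit as in Theorems \ref{th:asympplotkin} and \ref{th:asymelias} is the right skeleton. The use of $\delta>2\xy\lambda$ to make $r_n-\xy\ell$ eventually positive, and the identification of the limiting relative radius $\xi=\frac{\delta-2\xy\lambda}{2(\sigma-\lambda)}$, are both correct.

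There is, however, one concrete gap at the step ``combining the two estimates yields the stated bound.'' Your two limits are normalized by different quantities: the ambient term by $n$, the ball term by $k=(\sigma-\lambda)n$. Converting the latter to the common normalization gives
$$n^{-1}\log_{|\A|}\bigl|B^{k}(\xi_n k)\bigr|=\frac{k}{n}\cdot k^{-1}\log_{|\A|}\bigl|B^{k}(\xi_n k)\bigr|\longrightarrow(\sigma-\lambda)\,H_{\A}(\xi),$$
so what the argument actually delivers is $\alpha(\A:\sigma,\lambda,\delta)\leq(\sigma-\lambda)\bigl(1-H_{\A}(\xi)\bigr)$, which is weaker than the stated $\sigma-\lambda-H_{\A}(\xi)$ whenever $\sigma-\lambda<1$ and $H_{\A}(\xi)>0$ (the two agree only in the block-code limit $\sigma=1$, $\lambda=0$ of the Remark). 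The same factor of $\sigma-\lambda$ is silently dropped in the paper's displayed proof of Theorem \ref{th:asymelias}, so you are faithfully reproducing the paper's computation rather than making a new error, but you should either carry the factor explicitly or state which normalization convention is in force. Two smaller points: the ball-volume lemma is stated only for relative radius in $[0,\gamma]$, so you should note that the bound is vacuous (or the lemma inapplicable) when $\xi>\gamma$; and since the lemma gives a $\limsup$, the exchange of $\xi_n\to\xi$ with $k\to\infty$ deserves the one line of justification you allude to (monotonicity of $r\mapsto|B^k(r)|$ sandwiched between $\xi\pm\varepsilon$ plus continuity of $H_{\A}$ suffices).
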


\begin{remark}
	As $\lambda \rightarrow 0$ and $\sigma \rightarrow 1$, which approaches the ordinary block code case, 
	the asymptotic sphere-packing bound becomes $\alpha(\A: 1,0,\delta)  \leq 1 - H_\A\left( \frac{\delta  }{2}\right)$ and the
	the upper bounds on 
	$\alpha(\A: \sigma,\lambda,\delta)$
	given in Theorems \ref{th:asympplotkin} and \ref{th:asymelias} become the asymptotic Plotkin and Elias bounds of \cite{GS04}, respectively.
\end{remark}
In Figures 1,2 and 3 we compare these asymptotic bounds for codes over $\Z_4$ and $\Z_8$.

\begin{figure}\label{fig:z4}
	\includegraphics[scale=0.65]{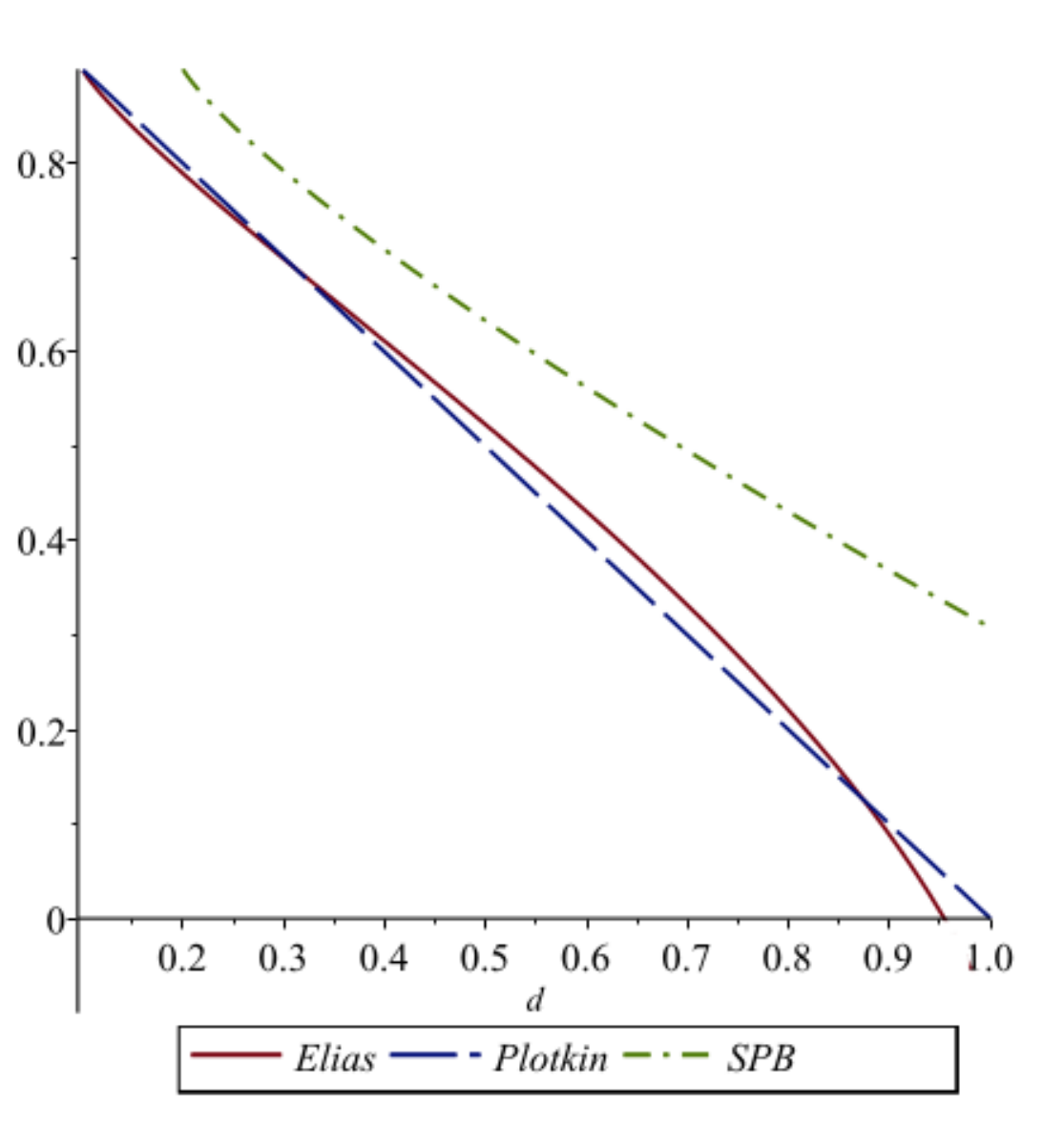}
	\caption{Asymptotic bounds for $R={\cal A}=\Z_4$, $\sigma=1,\lambda = 0.1$ for the induced Lee weight.}
\end{figure}

\begin{figure}\label{fig:z805}
	\includegraphics[scale=0.65]{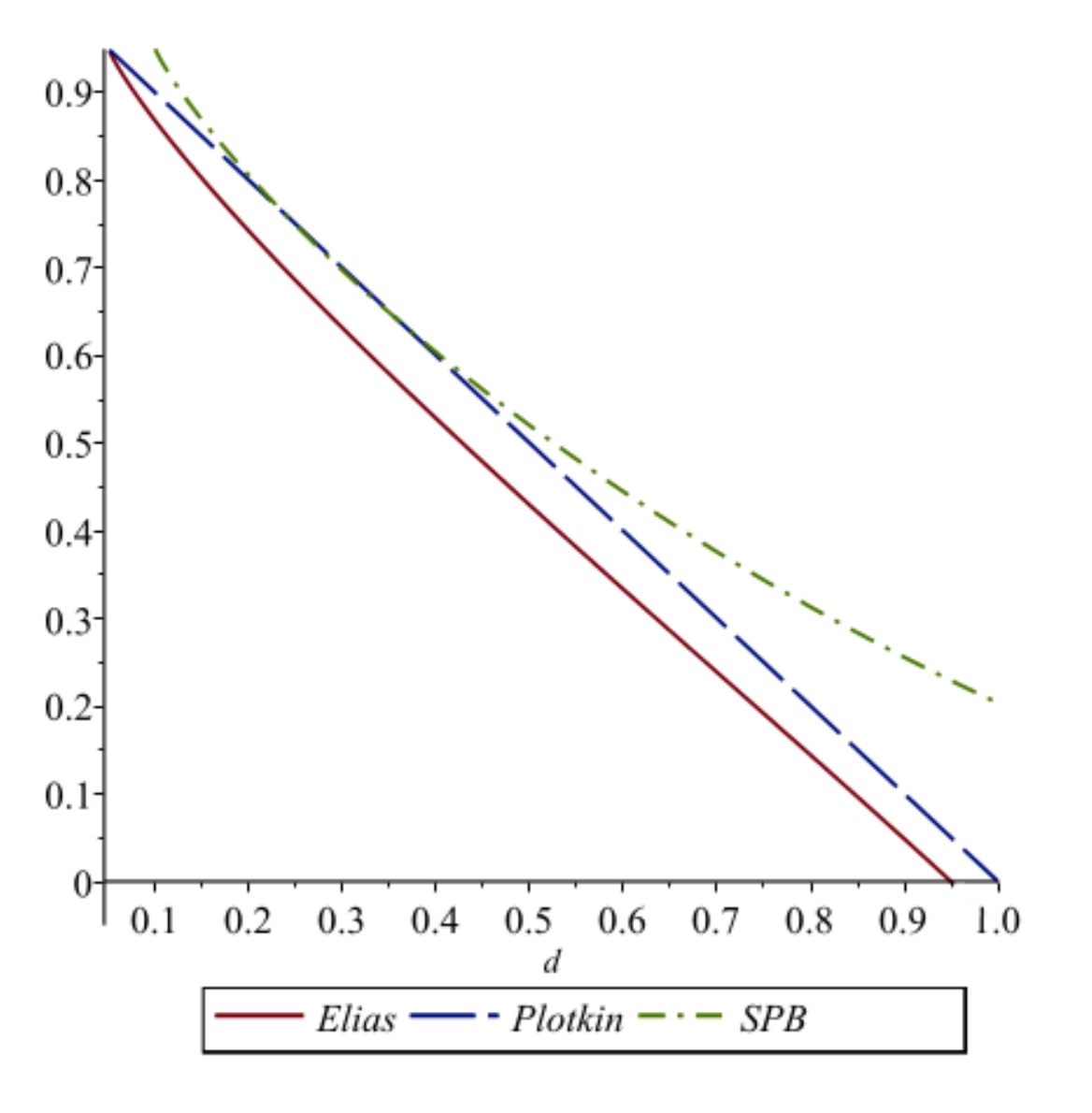}
	\caption{Asymptotic bounds for $R={\cal A}=\Z_8$, $\sigma=1,\lambda = 0.05$ for the induced homogeneous weight.}
\end{figure}

\begin{figure}\label{fig:z81}
	\includegraphics[scale=0.65]{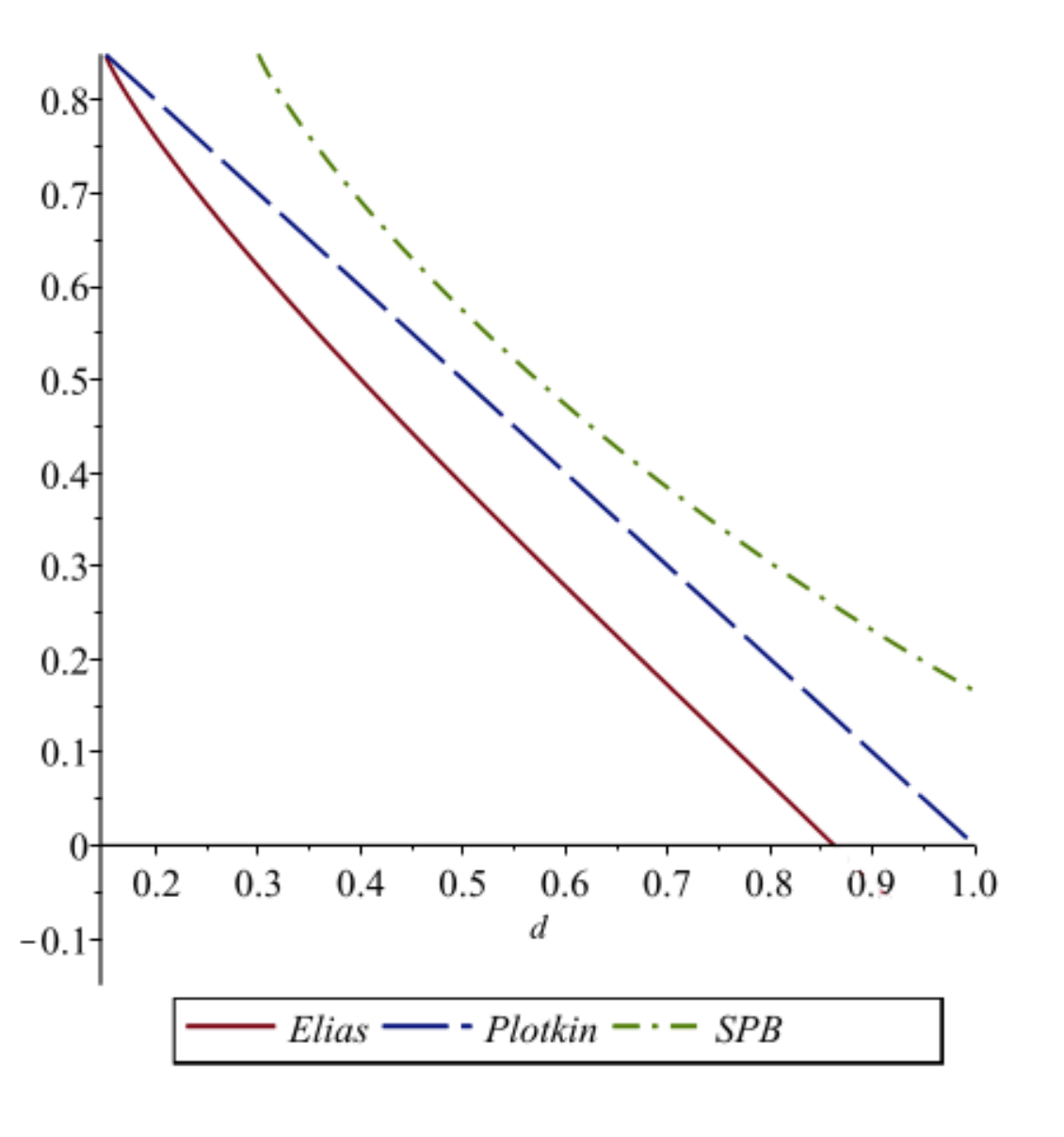}
	\caption{Asymptotic bounds for $R={\cal A}=\Z_8$, $\sigma=1,\lambda = 0.15$ for the induced homogeneous weight.}
\end{figure}

\section{Quotient Modules and Linear Network Coding}\label{sec:code} 

We define a communications network to be a directed acyclic graph ${\cal N} = (V,E)$, with a set of nodes or vertices $V$ and an edge set $E$ of order $n$. We assume that ${\cal N}$ has a single source node $s$ incident with some $m$ outgoing edges and has several sinks labelled by elements of a set ${\cal T}$. A message of $m$ packets is transmitted from the source node to be received at each sink in ${\cal T}$. For each sink $t \in {\cal T}$, let $E_t$ denote the set of edges incident with $t$ and let $|E_t|=n_t$. We furthermore assume that $n_t \geq m$ for each $t$ and indeed that there are at least $m$ edge disjoint paths connecting $s$ to $t$. It is assumed that the network topology is known by the sender and the sinks, since we are working with a coherent network model. 

\subsection{Linear Network Codes}

We now formally describe a network and what we mean by a linear code for a network, extending the definition given in \cite{YYN11} for the finite field case. Such a code will be a collection of codes (one for each sink). We do not require each code to itself be linear, the term `linear' in this context refers to the fact that data traverses the network via sequences of $R$-linear homomorphisms. In other words, at each node in the network, linear combinations of packets on its incoming edges are transmitted along its outgoing edges. If the set of messages is itself a vector space or module, as in the Example \ref{ex:toy}, then so will be each code, however, we do not impose this restriction in general.

%In the coherent network coding model, we encode the source packets from $\cal A$ via $R$-linear homomorphisms.

The network ${\cal N}$ has $m$ edges with tail at the source node $s$ and $n$ edges in total. The set of messages of $\cal N$ is a subset ${\cal M}_0$ of ${\cal A}^m$. We do not assume that ${\cal M}_0$ is itself an $R$-module.
Each message
$x_0 \in {\cal M}_0$ corresponds to a unique network word $x=[x_0,0]$ in ${\cal M} :=\{ [u,0]: u \in {\cal M}_0\} \subset {\cal A}^n$, under the canonical embedding ${\cal A}^m$ into ${\cal A}^n$. The network itself may be identified with ${\cal A}^n$, where each $i$-th coordinate projection from ${\cal A}^n$ onto ${\cal A}$ corresponds to the $i$-th edge of the network, under some ordering.

%We outline, following \cite{KM03} an invertible transfer morphism for ${\cal N}$.
%Let ${\cal L}$ denote the line graph of ${\cal N}$ and let $A({\cal L})$ be its adjacency matrix. Let $P \in R^{n \times n}$ such that $A({\cal L})_{u,v}=0$ implies $P_{u,v}=0$. Since ${\cal N}$ is acyclic, $P$ is nilpotent, of index $N$ for some positive integer $N$ and $I+P+\cdots+P^{N-1}$ is invertible with inverse $I-P$. 
%A word $z \in {\cal A}^n$ is transmitted along the network as:
%$$z \mapsto z + zP + zP^2 + \cdots zP^{N-1} = z(I-P)^{-1} $$

A word $z \in {\cal A}^n$ is transmitted along the network %via right-multiplication by an invertible transfer matrix $F \in R^{n \times n}$ or equivalently 
by an $R$-automorphism of ${\cal A}^n$ that
can be realized as an invertible transfer map of the form:
$$\f :  {\cal A}^n \longrightarrow {\cal A}^n : z \mapsto \f(z)=(f_1(z),...,f_n(z)),$$
for some $R$-linearly independent epimorphisms $f_j \in \Hom({\cal A}^n,{\cal A}).$
If $x \in {\cal A}^n$ is transmitted from the source node $s$ and some edges of the network are corrupted by errors in the form of an error word 
$e \in {\cal A}^n$ then the network transmission is given by
$ y = \f(x + e).$ In other words, it is assumed that errors propagate through the network.

In \cite{KM03}, $\f$ is represented by an invertible transfer matrix $F \in R^{n \times n}, R=\F_q$, with respect to some fixed basis of $R^n$.
For the usual {\em scalar linear} network coding (as in \cite{YNY07,YY07,Z08}) we have the case $R ={\cal A} = \F_q$. % with $\f$ an invertible matrix in $\f_q^{n \times n}$. 
For {\em vector linear} network coding often described in the literature (e.g. \cite{SKK10}), we usually see
$R = \F_q$ and ${\cal A} = \F_{q^r}$ or $\F_q^r$.

%The network coding model described in \cite{YNY07,YY07,Z08} results from the case $R ={\cal A} = \F_q$. 
%The approach in \cite{SKK10} corresponds to the case $R=\F_q$ and ${\cal A}={\F_q^r}$. 

We now define a set of $R$-homomorphisms associated with each sink node. Each one will give rise to a code over ${\cal A}$.
For each sink $t \in {\cal T}$, let 
$$\Pi_t: {\cal A}^n \longrightarrow {\cal A}^{n_t} :z \mapsto (z_j)_{j \in E_t}$$ be the projection onto the coordinates indexed by the edges of the network incident with $t$. 
We further define maps $${\cal F}_t := \Pi_t \circ \f: {\cal A}^n \longrightarrow {\cal A}^{n_t}.$$
In the multicast setting, for each $t \in {\cal T}$, we require that ${\cal F}_t: {\cal M}\longrightarrow {\cal A}^{n_t}$ be an injection, in order that each sink $t$ can decode the transmitted word to the same unique message in ${\cal M}$. 
\begin{definition}
	Let %$({\cal N},{\cal F})$ 
	${\cal N}$ be a network with transfer map 
	${\cal F} \in {\rm Aut}_R({\cal A}^n)$. 
	The network code for node $t$ of $({\cal N},{\cal F})$ is the set
	$${\cal C}_t:=\{ \f_t(x) \in \A^{n_t}: x \in {\cal M}\} \subset {\cal A}^{n_t}.$$
	The network code of $({\cal N},{\cal F})$ is the collection
	${\cal C}:=\{{\cal C}_t : t \in {\cal T}\}$.
\end{definition}

We reiterate that neither ${\cal M}$ nor any ${\cal C}_t$ need be an $R$-module; the linearity of ${\cal C}$ refers only to $R$-linearity of the transfer map ${\cal F}$.

For the network message $x \in  {\cal A}^n$ and an error word $e \in {\cal A}^n$ the word received by node $t$ is
$y_t = \f_t(x+e). $
We denote by ${\cal K}_t$ the kernel of the map ${\cal F}_t$ in ${\cal A}^n$ so that $\cc_t = \f_t({\cal M}) \subset  {\cal A}^{n_t}$ which is isomomorphic as a bimodule to ${\cal A}^{n} / {\cal K}_t$. % may be thought of as a set of coset representatives of ${\cal K}_t$ in ${\cal A}^{n_t}$.
Observe that if $e \in {\cal K}_t$ then $y_t={\cal F}_t(x+e) = {\cal F}_t(x)$ is received as if no errors have occurred. If $m=n_t$, the kernel ${\cal K}_t$ is trivial and the decoder will not detect any errors.

Given the received word $y \in {\cal A}^{n_t}$, the decoder at node $t$ 
decides that $c = \f_t(x)$ has been transmitted
if $\dd_t(z,c)< \dd_t(z,c')$ for all $c' \in {\cal C}_t$.

%\begin{definition}
%	${\cal N}$ be a network with transfer map 
%	${\cal F} \in {\rm Aut}_R({\cal A}^n)$, let $t\in {\cal T}$ and let ${\cal C}_t$ be a network code for for $t$. Let ${\bf d}$ be a distance function on ${\cal A}^n$ and let $\dd_t$ be the induced distance function
%	associated with $t$. 
%	We say that ${\cal C}_t$ is $\delta$-error correcting 
	%for any $z \in {\cal A}^{n_t}$ and $c \in {\cal C}_t$ satisfying $\dd_t(z,c)<\delta$, it holds that
	%$$\dd_t(z,c)< \dd_t(z,c')$$ for all $c' \in {\cal C}_t$.
%\end{definition}

\begin{example}
	Let $R = {\cal A} = \F_q$ and let ${\bf w}$ denote the usual Hamming weight on $\F_q^n$. 
	Let ${\cal N}$ be a network and let ${\cal C}_t$ be a network code for ${\cal N}$ at one of its sink nodes $t$. ${\cal F}_t$ has a representation as an $n \times n_t$ matrix $F_t$ with respect to a chosen basis. Then 
	${\bf w}_t(u) =\min \{ {\bf w}(z) : zF_t = u, \}={\bf w}(x+{\cal K}_t)$ counts the minimum number of linearly independent rows of $F_t$ required to obtain a representation of 
	$u = x F_t$. 
	If $x \in {\cal M}$ is transmitted and $y$ is received at
	$t$, the decoder will decode to $y-eF_t \in {\cal C}_t$ for some error $e \in \F_q^n$ of least Hamming weight satisfying $y=(x+e)F_t$.
	In other words, the decoder will assume that an adversary has tampered with 
	the least number of network edges resulting in a non-trivial contribution to the received word $y$.
\end{example}

Let $d_t$ denote the minimum distance of ${\cal C}_t$ with respect to $\dd_t$. 
%Clearly, $\dim {\cal K}_t = n-n_t$ for each sink $t \in {\cal T}$. 
Let $\ell_t$ be the size
of the support of ${\cal K}_t$.
For each $t \in {\cal T}$, we write ${\cal M}_t$ to denote the preimage of ${\cal C}_t$ in ${\cal A}^{n}$, so that ${\cal F}_t({\cal M}_t)={\cal C}_t$. We let $s_t$ denote the support of  ${\cal M}_t$.
We say that ${\cal C}$ is an $(n,\{(n_t,s_t \ell_t,|{\cal C}_t|, d_t): t \in {\cal T}\})$ network code.  
We define $$s({\cal C}):=\min \{ |{\cal C}_t| :t \in {\cal T}\},$$ which we call the size of $\cal C$, and seek upper bounds on this number, which is the effective maximum possible size of the message space ${\cal M}$.
For example, with respect to this notation, the network code described in Example \ref{ex:toy} is a 
$$(15,\{(2, 15,4,1), (3,15,4,1)\})$$ network code of size $4$.

\begin{definition}
	We denote by $A(n,\{( n_t,s_t,\ell_t,d_t ): t \in {\cal T}\})$ the maximum size $s({\cal C})$ of any $(n,\{(n_t, s_t,\ell_t,|{\cal C}_t|, d_t): t \in {\cal T}\})$ network code ${\cal C}$.
\end{definition}

There are network coding analogues of the sphere-packing, Gilbert-Varshamov and Singleton bounds over finite fields \cite{YNY07,YYN11}).
With the results presented here, we now add Plotkin and Elias-Bassalygo bounds.
For codes with alphabet $\F_2$, we give a graphical comparison of the Sphere-Packing, Singleton, Plotkin and Elias bounds, shown in Figure 4. 

\begin{figure}[h!]\label{fig:ffplot}	
	\includegraphics[scale=0.65]{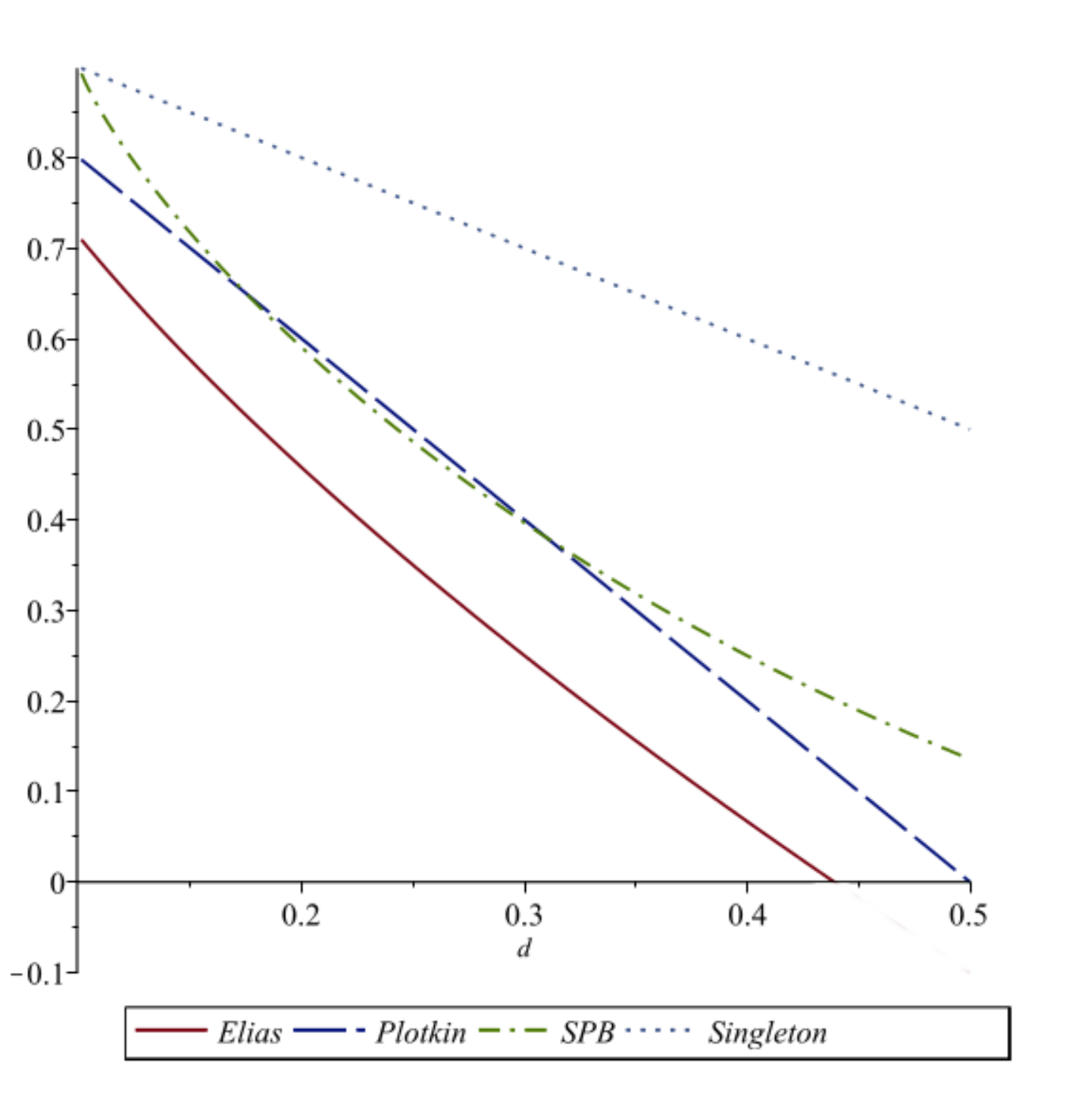}
	\caption{$R={\cal A} = \F_2$, $\sigma=1,\lambda = 0.1$}
\end{figure}

%%%%%%%%%%%%%%%%%%%%%%%%%%%%%%%%%%%%%%%%   

\providecommand{\bysame}{\leavevmode\hbox to3em{\hrulefill}\thinspace}

\end{document}